\documentclass[a4paper]{article}

\usepackage{a4wide}
\usepackage{amsmath}
\usepackage{amssymb} 
\usepackage{bm}
\usepackage{booktabs}
\usepackage{algorithm}
\usepackage{multicol}
\usepackage{multirow}
\usepackage[T1]{fontenc}
\usepackage{lmodern, textcomp}
\usepackage{amsfonts}
\usepackage{graphicx}
\usepackage{epstopdf}
\usepackage{algorithmic}
\usepackage{cleveref}
\usepackage{amsthm}
\usepackage{url}
\usepackage{color}
\usepackage{subcaption}

\newtheorem{theorem}{Theorem}[section]
\newtheorem{lemma}{Lemma}[section]

\title{Block cross-interactive residual smoothing for\\ Lanczos-type solvers for linear systems with\\ multiple right-hand sides\thanks{\textbf{Funding:} This study was supported by grant numbers JP20K14356, JP21K11925, 23K21673, 23H00462, JP24K14985 and JP24K00535 from Grants-in-Aid for Scientific Research Program (KAKENHI) of the Japan Society for the Promotion of Science (JSPS).}}

\date{}

\author{Kensuke Aihara\thanks{Department of Computer Science, Faculty of Information Technology, Tokyo City University, 1-28-1 Tamazutsumi, Setagaya-ku, Tokyo 158-8557, Japan 
  ({\tt aiharak@tcu.ac.jp}).}
\and Akira Imakura\thanks{Institute of Systems and Information Engineering, University of Tsukuba, 1-1-1 Tennodai, Tsukuba, Ibaraki 305-8573, Japan 
  ({\tt imakura@cs.tsukuba.ac.jp}, {\tt morikuni@cs.tsukuba.ac.jp}).}
\and Keiichi Morikuni\footnotemark[3]}

\begin{document}

\maketitle

\begin{abstract}
Lanczos-type solvers for large sparse linear systems often exhibit large oscillations in the residual norms. 
In finite precision arithmetic, large oscillations increase the residual gap (the difference between the recursively updated residual and the explicitly computed residual) and a loss of attainable accuracy of the approximations. 
This issue is addressed using cross-interactive residual smoothing (CIRS). 
This approach improves convergence behavior and reduces the residual gap. 
Similar to how the standard Lanczos-type solvers have been extended to global and block versions for solving systems with multiple right-hand sides, CIRS can also be extended to these versions. 
While we have developed a global CIRS scheme (Gl-CIRS) in our previous study [K. Aihara, A. Imakura, and K. Morikuni, SIAM J. Matrix Anal. Appl., 43 (2022), pp.1308--1330], in this study, we propose a block version (Bl-CIRS). 
Subsequently, we demonstrate the effectiveness of Bl-CIRS from various perspectives, such as theoretical insights into the convergence behaviors of the residual and approximation norms, numerical experiments on model problems, and a detailed rounding error analysis for the residual gap. 
For Bl-CIRS, orthonormalizing the columns of direction matrices is crucial in effectively reducing the residual gap. 
This analysis also complements our previous study and evaluates the residual gap of the block Lanczos-type solvers. 
\end{abstract}

\

\noindent
{\bf Keywords.}
multiple right-hand sides, block Lanczos-type solver, residual smoothing, residual gap, inexact orthonormalization

\

\noindent
{\bf AMS subject classifications.}
65F10, 65F25

\section{Introduction}

In this study, we consider solving large sparse linear systems with multiple right-hand sides
\begin{align}
	AX = B,\quad A \in \mathbb{R}^{n \times n},\quad B = [ \bm{b}_1, \bm{b}_2,\dots, \bm{b}_s ] \in \mathbb{R}^{n \times s}, \label{eq:AX=B}
\end{align}
where $A$ is nonsingular and not necessarily symmetric, and the number of right-hand sizes $s$ is moderate, that is, $s \ll n$. 
This study also focuses on block Lanczos-type solvers for the problem~\eqref{eq:AX=B} and proposes a refined residual smoothing technique for improving their convergence behavior and attainable accuracy of approximations. 
For a comprehensive discussion, we first outline the background, motivation, and related results of this study.

\subsection{Application problems} 

The problem \eqref{eq:AX=B} is extensively studied in various scientific computing fields; for instance, numerical simulations of electromagnetic scattering and wave propagation \cite{Miller1991, SmithPetersonMittra1989IEEE, SmithPetersonMittra1990IEEE, Smith1987th}, quantum scattering~\cite{YangMiller1989}, structural mechanic~\cite{GuennouniJbilouSadok2003ETNA, PapadrakakisSmerou1990}, chemistry~\cite{ChatfieldReevesTruhlarDuneczkySchwenke1992}, multiple radiation and scattering problems in structural acoustics~\cite{MalhotraFreundPinsky1997}, and lattice quantum chromodynamics~\cite{SakuraiTadanoKuramashi2010, TadanoKuramashiSakurai2010}. 
The problem~\eqref{eq:AX=B} also arises as a subproblem in numerical methods such as the block Lanczos procedure for eigenvalue problems~\cite{Cullum1978BIT,GolubUnderwood1977}, a block variant of the Jacobi--Davidson method~\cite{SleijpenBootenFokkemaVanderVorst1996BIT}, block eliminations by bordering a square matrix with a few rows and columns~\cite{FreundMalhotra1997LAA}, and other projection-type eigensolvers~\cite{IkegamiSakurai2010, IkegamiSakuraiNagashima2010JCAM, Polizzi2009}. 
These challenges in applications have motivated the community to deeply explore fast and accurate numerical algorithms for solving such problems \eqref{eq:AX=B}.

\subsection{Krylov subspace methods for multiple right-hand sides}

This study focuses on using Krylov subspace methods to solve \eqref{eq:AX=B}. 
For a single linear system $A\bm{x}=\bm{b}$ ($\bm{b} \in \mathbb{R}^n$), the standard Krylov subspace is defined as follows: 
\begin{align}
\mathcal K_k(A, \bm{r}_0) := \text{span}(\bm{r}_0, A\bm{r}_0, \dots, A^{k-1}\bm{r}_0), \label{KS}
\end{align}
where $\bm{r}_0:=\bm{b}-A\bm{x}_0$ is the initial residual for a given initial guess $\bm{x}_0 \in \mathbb{R}^n$. 
The Krylov subspace has been extended in two ways to simultaneously solve the $s$ linear systems in \eqref{eq:AX=B}; global and block Krylov subspaces. 
The global- and block-type solvers have advantages over the standard ones. 
In the following, let $X_0 \in \mathbb{R}^{n\times s}$ be an initial guess and $R_0 := B - A X_0$ be the corresponding initial residual for \eqref{eq:AX=B}. 

The global-type solvers generate approximations by constructing the following search subspace (often referred to as a matrix Krylov subspace \cite{JbilouMessaoudiSadok1999APNUM}):
\begin{align}
\mathcal K_k^G(A, R_0) := \left\{ \sum_{i=0}^{k-1} c_i A^i R_0 \, \middle| \, c_i \in \mathbb{R}\right\}, \label{Gl-KS}
\end{align}
which is a simple extension of \eqref{KS} for multiple right-hand sides. 
They can also handle more general linear matrix equations easily and thus have favorable usability. 
Specific global methods (e.g., see \cite{JbilouSadokTinzefte2005ETNA, ZhangDaiZhao2010AMC}) for \eqref{eq:AX=B} correspond to their standard counterparts for $(I_{ns}\otimes A)\hat{\bm{x}} = \hat{\bm{b}}$, where $I_{ns}$ is the identity matrix of order $ns$, $\otimes$ denotes the Kronecker product, and $\hat{\bm{b}} := [\bm{b}_1^\top,\bm{b}_2^\top,\dots,\bm{b}_s^\top]^\top$. 
Therefore, the global-type solvers have essentially the same numerical properties as the corresponding standard ones for a single linear system. 

The block-type solvers utilize a so-called block Krylov subspace
\begin{align}
\mathcal{K}_k^\square (A, R_0) := \left\{ \sum_{i=0}^{k-1} A^i R_0 c_i^\square \, \middle| \, c_i^\square \in \mathbb{R}^{s \times s} \right\}, \label{Bl-KS}
\end{align}
and a column of the $k$th approximation $X_k$ is determined with expanding $s$-dimensional subspace at each iteration. 
Therefore, this type of method determines approximations more efficiently than the standard and global ones. 
Note that \eqref{Bl-KS} simplifies to \eqref{Gl-KS} when $c_i^\square := c_i I_s$ for $c_i \in \mathbb{R}$. 

As in the standard Krylov subspace methods, most of the global- and block-type solvers are broadly classified into the methods based on the (global or block) Lanczos or Arnoldi process. 
In the next subsection, we briefly review the specific block Lanczos-type solvers for \eqref{eq:AX=B} with $s > 1$.

\subsection{Block Lanczos-type solvers}

The most basic block-type solver is the block conjugate gradient (Bl-CG) method for symmetric positive definite linear systems (e.g., see \cite{ChanNg1999SISC, ChanWan1997SISC, GolubRuizTouhami2007SIMAX, NikishinYeremin1995SIMAX, OLeary1980LAA, Saad1987MathComp}). 
Many block-type solvers for nonsymmetric (or non-Hermitian) linear systems have been developed to extend the capabilities of Bl-CG. 
These include block versions of BiCG and its associated product-type methods. 

The underlying block BiCG (Bl-BiCG) method was presented by O'Leary~\cite{OLeary1980LAA}, and its quasi-minimal residual (QMR) variants were developed by Freund and Malhotra~\cite{FreundMalhotra1997LAA} and Simoncini~\cite{Simoncini1997SIMAX}. 
The block BiCG stabilized (Bl-BiCGSTAB) method, which is a representative block product-type BiCG, was proposed by El Guennouni et al.~\cite{GuennouniJbilouSadok2003ETNA}. 
The block Lanczos/Orthodir and BIODIR methods~\cite{GuennouniJbilouSadok2004APNUM} were also developed by the same authors. 
Afterward, several modifications of the block-type solvers were studied; for instance, Tadano et al.~\cite{TadanoSakuraiKuramashi2009} reformulated Bl-BiCGSTAB as the block BiCG gap-reducing (Bl-BiCGGR) method, and Rashedi et al.~\cite{RashediEbadiBirkFrommer2016JCAM} modified the block (and global) versions of BiCG, QMR, and BiCGSTAB to improve their numerical stability. 
Other block product-type BiCG, such as the block BiCGstab($\ell$)~\cite{SaitoTadanoImakura2014JSIAML} and block generalized product-type BiCG (Bl-GPBiCG)~\cite{KuramotoTadano2020TJSIAM, TaherianToutounian2021NUMALGO} methods, were also considered. 
Moreover, Du et al.~\cite{DuSogabeYuYamamotoZhang2011JCAM} developed a block version of the induced dimension reduction (IDR)($s$) method, which extends Bl-BiCGSTAB, and Naito et al.~\cite{NaitoTadanoSakurai2012JSIAMLett} proposed its modified variant. 

We also remark on the numerical stability of block-type solvers. 
Unlike the standard- and global-type solvers, the block-type solvers require inversions of $s\times s$ matrices. 
These matrices can be ill-conditioned as the iteration proceeds, leading to a deterioration in the convergence. 
To address this issue, we orthonormalize the columns of $n \times s$ iteration matrices. 
The effectiveness of the orthonormalization strategy has been demonstrated in numerous studies (e.g., see \cite{Dubrulle2001ETNA, KuramotoTadano2020TJSIAM, NakamuraIshikawaKuramashiSakuraiTadano2012, OLeary1980LAA, RashediEbadiBirkFrommer2016JCAM, Tadano2019JJIAM}), and such a stabilized implementation is currently the standard approach.

\subsection{On the residual gap}\label{sec1.4}

An important issue with block-type solvers is the amplification of the residual gap, which can lead to a loss of attainable accuracy of the approximations. 
This issue is examined in detail below.

For updating the approximation $X_k$ and the corresponding residual $R_k$, the block-type solvers typically use the recursion formulas
\begin{align}
	X_{k+1} = X_k + P_k \alpha_k^\square, \quad R_{k+1} = R_k - (AP_k) \alpha_k^\square, \quad k = 0,1,\dots, \label{Rec_X_R_P}
\end{align}
respectively, where $P_k \in \mathbb{R}^{n\times s}$ is a direction matrix and $\alpha_k^\square \in \mathbb{R}^{s \times s}$ is determined under a certain condition such as orthogonality or minimization. 
Given the initial residual $R_0$ ($= B - A X_0$), the equality $R_k = B - A X_k$ holds for $k = 1, 2, \dots$, in exact arithmetic, but does not necessarily hold in finite precision arithmetic due to rounding errors. 
Let us introduce the residual gap as follows:
\begin{align}
G_{R_k} := (B-AX_k) - R_k, \label{G_R_k}
\end{align}
where $B - A X_k$ and $R_k$ are the explicitly computed residual (also called true residual) and recursively updated residual, respectively. 
Then, the true residual norm can be bounded as follows: 
\begin{align}
	\| G_{R_k} \| - \| R_k \| \leq \| B-AX_k \| \leq \| G_{R_k} \| + \| R_k \|. \label{TRR}
\end{align}	
Throughout, $\| \cdot \|$ denotes the Frobenius norm, that is, $\| X \| := \sqrt{\langle X, X \rangle_F}$ with an inner product $\langle X, Y \rangle_F := \text{tr}(X^\top Y)$ for $X, Y \in \mathbb{R}^{n\times s}$. 
The bounds \eqref{TRR} imply that the attainable accuracy of $X_k$ measured by the true residual norm depends on $\|G_{R_k}\|$ when $\|R_k\|$ becomes sufficiently small. 
In this study, we propose a refined residual smoothing scheme to suppress an increase in the residual gap and to improve attainable accuracy. 

The residual gap for \eqref{Rec_X_R_P} was evaluated in our study \cite{AiharaImakuraMorikuni2022SIMAX} assuming that the columns of the direction matrix $P_k$ are exactly orthonormalized. 
The aforementioned orthonormalization strategy for block-type solvers is related to the residual gap (and our residual smoothing scheme described later). 
In the following, the column-orthonormal matrix is expressed as $Q_k$ rather than $P_k$, $\mathbb{F} \subset \mathbb{R}$ is a set of floating point numbers, fl$(\cdot)$ denotes the result of floating point operations, $\textbf{u}$ denotes the unit roundoff, and $m$ is the maximum number of nonzero entries per row of $A$. 

\begin{theorem}[{\cite[Theorem~2.2]{AiharaImakuraMorikuni2022SIMAX}}]\label{Th1}
In finite precision arithmetic, let $X_k \in \mathbb{F}^{n \times s}$ and $R_k \in \mathbb{F}^{n \times s}$ be the $k$th approximation and residual, respectively, generated by 
\begin{align}
	X_{k+1} = X_k + Q_k \alpha_k^\square, \quad R_{k+1} = R_k - (AQ_k) \alpha_k^\square, \quad k=0,1,\dots, \label{Rec_X_R_Q}
\end{align}
where $X_0 := O$. 
If the columns of $Q_j \in \mathbb{R}^{n \times s}$ are (exactly) orthonormal for $j < k$, the norm of the residual gap $G_{R_k} = (B-A X_k) - R_k$ is bounded as 
\begin{align*}
	\| G_{R_k} \| \lesssim k (3 + 4s\sqrt{s} + 2m\sqrt{s}) \mathbf{u} \| A \| \max_{0<j\leq k} \| X_j \| + 3 (k+1) \mathbf{u} \max_{0\leq j \leq k} \| R_j \|.
\end{align*}
Here, the inequality $\lesssim$ is due to the omission of $\mathcal{O}(\mathbf{u}^2)$ and regarding $\mathbf{u} \| \mathrm{fl} (\cdot) \|$ as $\mathbf{u} \| \cdot \|$. 
\end{theorem}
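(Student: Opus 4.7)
The plan is to derive a one-step recursion for the residual gap and then sum it up. First, I would write the floating-point versions of the two recursions in \eqref{Rec_X_R_Q} by splitting each into its elementary rounded operations: the matrix product $\mathrm{fl}(AQ_j)=AQ_j+E_j^{(1)}$ (sparse matvec with $m$ nonzeros per row), the matrix-matrix products $\mathrm{fl}(Q_j\alpha_j^{\square})=Q_j\alpha_j^{\square}+E_j^{(2)}$ and $\mathrm{fl}(\mathrm{fl}(AQ_j)\alpha_j^{\square})=AQ_j\alpha_j^{\square}+E_j^{(1)}\alpha_j^{\square}+E_j^{(3)}$, and the two additions/subtractions that yield further rounding terms $E_j^{(4)}$ and $E_j^{(5)}$. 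Standard per-entry bounds give $\|E_j^{(1)}\|\le m\mathbf{u}\|A\|\|Q_j\|$, $\|E_j^{(2)}\|\le s\mathbf{u}\|Q_j\|\|\alpha_j^{\square}\|$, $\|E_j^{(3)}\|\le s\mathbf{u}\|AQ_j\|\|\alpha_j^{\square}\|$, and the addition terms are bounded by $\mathbf{u}$ times the norms of their operands.

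Next I would collect these into $X_{j+1}=X_j+Q_j\alpha_j^{\square}+\Delta X_j$ and $R_{j+1}=R_j-AQ_j\alpha_j^{\square}+\Delta R_j$, and substitute into the definition \eqref{G_R_k} of the gap. The exact cancellation of the $AQ_j\alpha_j^{\square}$ terms yields the one-step recursion
\begin{equation*}
G_{R_{j+1}}=G_{R_j}-A\,\Delta X_j-\Delta R_j.
\end{equation*}
Because $X_0=O$ forces $R_0=B$ exactly, we have $G_{R_0}=0$, and telescoping gives $\|G_{R_k}\|\le\sum_{j=0}^{k-1}(\|A\|\|\Delta X_j\|+\|\Delta R_j\|)$, leaving the work of bounding the two summands uniformly in $j$.

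The decisive step is where the orthonormality hypothesis on $Q_j$ enters. It supplies two ingredients: $\|Q_j\|=\sqrt{s}$, which turns the $m\mathbf{u}\|A\|\|Q_j\|$ and $s\mathbf{u}\|Q_j\|\|\alpha_j^{\square}\|$ bounds into $m\sqrt{s}\mathbf{u}\|A\|$ and $s\sqrt{s}\mathbf{u}\|\alpha_j^{\square}\|$ respectively; and the isometry identity $\|Q_j\alpha_j^{\square}\|=\|\alpha_j^{\square}\|$, which lets me replace $\|\alpha_j^{\square}\|$ by $\|X_{j+1}-X_j-\Delta X_j\|\le\|X_{j+1}\|+\|X_j\|+\mathcal O(\mathbf{u})$, and hence by $2\max_{0<j\le k}\|X_j\|$ after dropping the $\mathcal O(\mathbf{u}^2)$ contribution. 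A parallel argument using $AQ_j\alpha_j^{\square}=R_j-R_{j+1}+\mathcal O(\mathbf{u})$ bounds the $\|R\|$-type contribution (which comes from the final subtraction in the $R$-update) by a constant times $\max_{0\le j\le k}\|R_j\|$.

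Assembling the pieces: $\|A\|\|\Delta X_j\|$ contributes $(2s\sqrt{s}+3)\mathbf{u}\|A\|\max\|X_j\|$ (from $E_j^{(2)}$ and the addition), while $\|\Delta R_j\|$ contributes $(2s\sqrt{s}+2m\sqrt{s})\mathbf{u}\|A\|\max\|X_j\|$ (from $E_j^{(1)}\alpha_j^{\square}$ and $E_j^{(3)}$) plus a $3\mathbf{u}\max\|R_j\|$ term (from the outer addition). Summing the bound over the $k$ steps then yields the stated inequality. The main obstacle I anticipate is purely bookkeeping — keeping track of which operand of each rounded operation supplies the $\|A\|$, $\sqrt{s}$, $m$, or $s$ factor and not double-counting or accidentally inflating the constant $4s\sqrt{s}+2m\sqrt{s}+3$; beyond that, systematically discarding $\mathcal{O}(\mathbf{u}^2)$ terms and replacing $\mathbf{u}\|\mathrm{fl}(\cdot)\|$ by $\mathbf{u}\|\cdot\|$ as indicated in the statement keeps the analysis linear in $\mathbf{u}$.
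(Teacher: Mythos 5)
Your proposal is correct and follows essentially the same route as the paper's own analysis (carried out in \cref{sec4} for the inexact case and specialized here to exact orthonormality): decompose each update into elementary rounded operations, bound $\|\alpha_j^\square\|$ by $\|X_j\|+\|X_{j+1}\|$ via the isometry of $Q_j$, and telescope the one-step gap recursion $G_{R_{j+1}}=G_{R_j}-A\Delta X_j-\Delta R_j$ from $G_{R_0}=O$. The constant bookkeeping assembles to the stated $3+4s\sqrt{s}+2m\sqrt{s}$ and the $R$-term, so no gap remains.
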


This theorem implies that large approximation and residual norms contribute to an increase in the residual gap, as is often observed in practice. 
However, the exact orthonormality assumption for $Q_j$ is scarcely satisfied in finite precision arithmetic. 
As noted in \cite{AiharaImakuraMorikuni2022SIMAX}, the bounds on the residual gap remain unclear when performing an inexact orthonormalization to direction matrices. 
Although it was also remarked in several papers (e.g., see \cite{KuramotoTadano2020TJSIAM, NakamuraIshikawaKuramashiSakuraiTadano2012, Tadano2019JJIAM}) that the orthonormalization for iteration matrices is effective for reducing the residual gap, only qualitative observations were made and this improvement has not been examined quantitatively via rounding error analysis. 
Therefore, this study also aims to provide a quantitative evaluation of the residual gap when inexactness is allowed in the orthonormalization; the evaluation will be described in \Cref{add_th}.

\subsection{Advances in residual smoothing}

For the Lanczos-type solvers for a single linear system, residual smoothing introduced by Sch\"{o}nauer~\cite{Schonauer1987} is a well-known technique to obtain a non-increasing sequence of residual norms and has been investigated deeply by Weiss~\cite{Weiss1994NLAA} and Walker~\cite{Walker1995APNUM}. 
This technique can also be used to connect different solvers~\cite{Weiss1996}. 
For instance, Zhou and Walker~\cite{ZhouWalker1994SISC} clarified the connection between BiCG and QMR via residual smoothing and developed the so-called QMR smoothing scheme. 
In~\cite{ZhouWalker1994SISC}, an alternative smoothing implementation was also suggested to enhance numerical stability. 
Further, the standard residual smoothing was extended for multiple right-hand sides. 

Here, we introduce a block version of the simple residual smoothing (Bl-SRS) presented by Jbilou~\cite{Jbilou1999JCAM} (we refer to Zhang and Dai~\cite{ZhangDai2008NMJCU} for the global version). 
Let $\{X_k\}$ and $\{R_k\}$ denote the primary sequences of approximations and residuals, respectively. 
With $Y_0 := X_0$ and $S_0 := R_0$, new sequences of approximations $Y_k$ and the corresponding smoothed residuals $S_k$ ($= B - AY_k$) are generated by the recursion formulas 
\begin{align}
Y_k = Y_{k-1} + (X_k - Y_{k-1})\eta_k^\square, \quad S_k = S_{k-1} + (R_k - S_{k-1}) \eta_k^\square,\quad k=0,1,\dots, \label{Bl-SRS} 
\end{align}
where $\eta_k^\square \in \mathbb{R}^{s\times s}$ is a parameter matrix of residual smoothing. 
Here, the parameter matrix is typically determined by locally minimizing the residual norm 
\begin{align}
	\eta_k^\square = \arg \min \|S_k\| = \arg \min \|S_{k-1} + E_k \eta_k^\square \| = -(E_k^\top E_k)^{-1}(E_k^\top S_{k-1}), \label{Bl-eta}
\end{align}
where $E_k := R_k - S_{k-1}$ is assumed to have full column rank. 
Subsequently, the inequality $\|S_k\| \leq \min(\|R_k\|, \|S_{k-1}\|)$ holds, and we obtain a smooth convergence behavior of the residual norms. 

There is extensive literature on the relations between residual smoothing and residual gap. 
For a single right-hand side case, Gutknecht and Rozlo\v{z}n\'{i}k~\cite{GutknechtRozloznik2001BIT} clarified that the conventional smoothing schemes (including the Zhou--Walker implementation) do not help in improving the attainable accuracy. 
Specifically, rounding errors accumulated in the primary sequences propagate to the smoothed sequences, and the smoothed true residual norms stagnate at the same order of magnitude as the primary ones. 
To remedy this phenomenon, Komeyama et al.~\cite{AiharaKomeyamaIshiwata2019BIT, KomeyamaAiharaIshiwata2018TJSIAM} modified the Zhou--Walker implementation to ensure that the primary and smoothed sequences influence one another. 
This modification is referred to as cross-interactive residual smoothing (CIRS) and can reduce the residual gap and result in a higher attainable accuracy. 
As SRS has been extended to global and block versions~\cite{ZhangDai2008NMJCU,Jbilou1999JCAM}, CIRS is also extended. 
Our previous study~\cite{AiharaImakuraMorikuni2022SIMAX} presented a global version of CIRS (Gl-CIRS) for the global- and block-type solvers, and therefore, we propose a block version of CIRS (Bl-CIRS) in this study. 
In contrast to Gl-CIRS, Bl-CIRS requires attention to the conditioning of iteration matrices to be inverted for numerical stability. 
A common remedy is to equivalently reformulate the algorithm to orthonormalize the columns of iteration matrices. 
Bl-CIRS combined with such orthonormalization uses recursion formulas based on the forms \eqref{Rec_X_R_Q} to update the approximations and the corresponding smoothed residuals as follows: 
\begin{align}
Y_{k+1} = Y_k + \tilde Q_{k+1} \tilde\eta_{k+1}^\square, \quad S_{k+1} = S_k - (A\tilde Q_{k+1}) \tilde \eta_{k+1}^\square, \quad k = 0,1,\dots, \label{Y_S_CIRS}
\end{align}
where $\tilde Q_{k+1} \in \mathbb{R}^{n\times s}$ is a column-orthonormal matrix and $\tilde \eta_{k+1}^\square \in \mathbb{R}^{s\times s}$ is a redefined parameter matrix; for details, refer to \cref{sec2.2}. 

\Cref{Deff_res} summarizes the recursion formulas for the aforementioned residual smoothing schemes. 
This table shows that this study represents a culmination of the CIRS schemes. 
Since Bl-CIRS can be directly derived from the above relationships, our focus is primarily on its theoretical foundation and numerical support. 
Additionally, we evaluate its residual gap when implemented in conjunction with the orthonormalization strategy.

\begin{table}[t]
\centering
\caption{Difference in the recursion formulas for updating smoothed residuals.}\label{Deff_res}
\begin{tabular}{c|c|c} \toprule
\bf Type       						& \bf SRS scheme & \bf CIRS scheme \\ \midrule
\multirow{2}{*}{Standard} 		& cf.~\cite{Schonauer1987,Weiss1996} & cf.~\cite{AiharaKomeyamaIshiwata2019BIT,KomeyamaAiharaIshiwata2018TJSIAM} \\ 
\multirow{2}{*}{($A\bm{x}=\bm{b}$)} & $\bm{s}_k = \bm{s}_{k-1} + \eta_k(\bm{r}_k - \bm{s}_{k-1})$, & $\bm{s}_k = \bm{s}_{k-1} - \eta_k A\bm{v}_k$, \\
& $\bm{r}_k, \bm{s}_{k-1} \in \mathbb{R}^n,\quad \eta_k \in \mathbb{R}$ & $\bm{v}_k, \bm{s}_{k-1} \in \mathbb{R}^n,\quad \eta_k \in \mathbb{R}$\\ \midrule
\multirow{2}{*}{Global} & cf.~\cite{ZhangDai2008NMJCU} & cf.~\cite{AiharaImakuraMorikuni2022SIMAX} \\ 
\multirow{2}{*}{($AX=B$)} & $S_k = S_{k-1} + \eta_k (R_k - S_{k-1})$, & $S_k = S_{k-1} - \eta_k AV_k$, \\
& $R_k, S_{k-1} \in \mathbb{R}^{n\times s},\quad \eta_k \in \mathbb{R}$ & $V_k, S_{k-1} \in \mathbb{R}^{n\times s},\quad \eta_k \in \mathbb{R}$\\ \midrule
\multirow{2}{*}{Block} & cf.~\cite{Jbilou1999JCAM} & [Present study] \\ 
\multirow{2}{*}{($AX=B$)} & $S_k = S_{k-1} + (R_k - S_{k-1})\eta_k^\square$, & $S_k = S_{k-1} - (A\tilde Q_k)\tilde\eta_k^\square$, \\
& $R_k, S_{k-1} \in \mathbb{R}^{n\times s},\quad \eta_k^\square \in \mathbb{R}^{s\times s}$ & $\tilde Q_k, S_{k-1} \in \mathbb{R}^{n\times s},\quad \tilde\eta_k^\square \in \mathbb{R}^{s\times s}$\\ \bottomrule
\end{tabular}
\end{table}

\subsection{Organization}

The remainder of this paper is organized as follows. 
\Cref{sec2} describes an underlying Bl-CIRS scheme and proposes its refined implementation combined with an orthonormalization strategy. 
\Cref{sec3} conducts numerical experiments to demonstrate the effectiveness of the proposed approach. 
In \cref{sec4}, we present a rounding error analysis for the residual gap when using Bl-CIRS with the orthonormalization strategy. 
Finally, concluding remarks are given in \cref{sec5}.

\section{Block cross-interactive residual smoothing}\label{sec2}

In this section, we present Bl-CIRS for reducing the residual gap of the Lanczos-type solvers for \eqref{eq:AX=B}. 
All the discussions in this section are assumed to be in exact arithmetic unless otherwise noted.

\subsection{Underlying Bl-CIRS scheme}

We briefly describe an underlying Bl-CIRS scheme that is a natural extension of the standard CIRS \cite[Algorithm~1]{AiharaKomeyamaIshiwata2019BIT} and Gl-CIRS \cite[Algorithm~3.1]{AiharaImakuraMorikuni2022SIMAX} to the block version.

Let $\tilde P_k \in \mathbb{R}^{n\times s}$ be the difference $X_{k+1}-X_k$ between adjacent approximations in the primary method. 
Note that $\tilde P_k$ also represents a direction matrix in the recursion formula $X_{k+1} = X_k + \tilde{P}_k$. 
After generating such a direction matrix $\tilde P_k$, we update an auxiliary matrix $V_k$ recursively as follows: 
\begin{align*}
V_{k+1} = V_k(I_s - \eta_k^\square) + \tilde P_k, 
\end{align*}
where $\eta_0^\square := O \in \mathbb{R}^{s\times s}$ and $V_0 := O \in \mathbb{R}^{n\times s}$. 
Subsequently, explicitly multiplying $V_{k+1}$ by $A$ gives another auxiliary matrix $U_{k+1} := AV_{k+1}$.  
We next update an approximation and the corresponding smoothed residual by the standard recursion formulas 
\begin{align*}
Y_{k+1} = Y_k + V_{k+1}\eta_{k+1}^\square,\quad S_{k+1} = S_k - U_{k+1}\eta_{k+1}^\square, 
\end{align*}
respectively, where $Y_0 := X_0$ and $S_0 := R_0$. 
Similarly to \eqref{Bl-eta}, a local minimization of $\|S_{k+1}\|$ gives $\eta_{k+1}^\square = (U_{k+1}^\top U_{k+1})^{-1}(U_{k+1}^\top S_k)$, where $U_{k+1}$ is assumed to have full column rank. 
We finally compute the primary approximation and residual by 
\begin{align*}
X_{k+1} = Y_{k+1} + V_{k+1}(I_s - \eta_{k+1}^\square),\quad R_{k+1} = S_{k+1} - U_{k+1}(I_s - \eta_{k+1}^\square), 
\end{align*}
and return them to the primary method. 

\Cref{alg1} displays Bl-CIRS above. 
By induction, it is shown that $Y_k$ and $S_k$ generated by \Cref{alg1} are the same as those generated by \eqref{Bl-SRS} with \eqref{Bl-eta}. 
Thus, $\|S_k\| \leq \min(\|R_k\|, \|S_{k-1}\|)$ holds also for Bl-CIRS, and a monotonically decreasing sequence of $\|S_k\|$ can be obtained. 
The essential difference of Bl-CIRS (\Cref{alg1}) from Gl-CIRS \cite[Algorithm~3.1]{AiharaImakuraMorikuni2022SIMAX} (or \cite[Algorithm~1]{AiharaKomeyamaIshiwata2019BIT}) is that the smoothing parameter of Bl-CIRS is an $s$-by-$s$ matrix $\eta_k^\square$ rather than a scalar. 
For details of CIRS, for instance, its derivation and properties, we refer the reader to \cite{AiharaImakuraMorikuni2022SIMAX, AiharaKomeyamaIshiwata2019BIT}.

\begin{algorithm}[t]
\caption{Underlying Bl-CIRS scheme.}\label{alg1}
\begin{algorithmic}[1]
\REQUIRE An initial guess $X_0$ and the initial residual $R_0:=B-AX_0$. 
\STATE Set $Y_0 := X_0$, $S_0 := R_0$, $V_0 := O$, and $\eta_0^\square := O$.
\FOR{$k=0,1,2,\dots$ until convergence}
	\STATE Compute $\tilde P_k$ (corresponding to $X_{k+1}-X_k$) using the primary method.
	\STATE $V_{k+1} = V_k(I_s - \eta_k^\square) + \tilde P_k$
	\STATE Compute $U_{k+1} = AV_{k+1}$ with an explicit multiplication by $A$.
	\STATE $\eta_{k+1}^\square = (U_{k+1}^\top U_{k+1})^{-1}(U_{k+1}^\top S_k)$
	\STATE $Y_{k+1} = Y_k + V_{k+1}\eta_{k+1}^\square,\quad S_{k+1} = S_k - U_{k+1}\eta_{k+1}^\square$
	\RETURN $X_{k+1} = Y_{k+1} + V_{k+1}(I_s - \eta_{k+1}^\square),\quad R_{k+1} = S_{k+1} - U_{k+1}(I_s - \eta_{k+1}^\square)$
\ENDFOR
\end{algorithmic}
\end{algorithm}

\subsection{Orthonormalization strategy}\label{sec2.2}

For the numerical stability of Bl-CIRS, we suggest orthonormalizing the columns of the auxiliary matrix $V_k \in \mathbb{R}^{n\times s}$ for each iteration. 
Hereafter, assume that $V_k$ has full column rank. 

Let $V_k = \tilde Q_k \tilde \xi_k$ be the (thin) QR decomposition, where $\tilde Q_k \in \mathbb{R}^{n\times s}$ is a column-orthonormal matrix and $\tilde \xi_k \in \mathbb{R}^{s\times s}$ is the associated upper triangular matrix. 
Then, introducing auxiliary matrices $\tilde U_k := A\tilde Q_k$ and $\tilde \eta_k^\square := \tilde \xi_k \eta_k^\square$, the approximations and residuals in \Cref{alg1} are rewritten as 
\begin{align*}
&Y_k = Y_{k-1} + V_k \eta_k^\square = Y_{k-1} + \tilde Q_k\tilde \eta_k^\square, \\
&S_k = S_{k-1} - U_k \eta_k^\square = S_{k-1} - \tilde U_k\tilde \eta_k^\square, \\
&X_k = Y_k + V_k (I_s - \eta_k^\square) = Y_k + \tilde Q_k(\tilde \xi_k - \tilde \eta_k^\square), \\
&R_k = S_k - U_k (I_s - \eta_k^\square) = S_k - \tilde U_k(\tilde \xi_k - \tilde \eta_k^\square),
\end{align*}
where the quantity $\tilde U_k := A\tilde Q_k$ ($=(S_{k-1} - R_k)\tilde \xi_k^{-1}$) is given by explicitly multiplying $\tilde Q_k$ by $A$ instead of $U_k := AV_k$ ($=S_{k-1} - R_k$). 
Similarly, the next auxiliary matrix can be expressed by 
\begin{align*}
(\tilde Q_{k+1}\tilde \xi_{k+1} =)\ V_{k+1} = V_k(I_s -\eta_k^\square) + \tilde P_k= \tilde Q_k(\tilde \xi_k - \tilde \eta_k^\square) + \tilde P_k. 
\end{align*}
The redefined smoothing parameter $\tilde \eta_k^\square$ is computed by 
\begin{align*} 
\tilde \eta_k^\square 
&:= \tilde \xi_k\eta_k^\square = \tilde \xi_k(U_k^\top U_k)^{-1}(U_k^\top S_{k-1}) \\
&= [(U_k\tilde \xi_k^{-1})^\top U_k\tilde \xi_k^{-1}]^{-1}[(U_k\tilde \xi_k^{-1})^\top S_{k-1}] = (\tilde U_k^\top \tilde U_k)^{-1}(\tilde U_k^\top S_{k-1}), 
\end{align*}
or equivalently, the final term comes from a local minimization of the smoothed residual norm, i.e., $\tilde \eta_k^\square = \arg \min \|S_{k-1} - \tilde U_k \tilde \eta_k^\square \|$. 
Here, the condition number of $\tilde{U}_k$ is bounded above by that of $A$. 

\Cref{alg2} displays Bl-CIRS combined with the orthonormalization strategy.~Here,\break $[\cdot, \cdot] = \textbf{qr}(\cdot )$ denotes the QR decomposition of a matrix in parentheses; the first and second variables on the left-hand side are the Q- and R-factors, respectively. 
The approximations and corresponding smoothed residuals are updated by the forms~\eqref{Y_S_CIRS}, and these forms are crucial in reducing the residual gap; refer to the next section and \cref{subsec3.2}.

\begin{algorithm}[t]
\caption{Refined Bl-CIRS scheme with orthonormalization.}\label{alg2}
\begin{algorithmic}[1]
\REQUIRE An initial guess $X_0$ and the initial residual $R_0:=B-AX_0$. 
\STATE Set $Y_0 := X_0$, $S_0 := R_0$, $\tilde Q_0 := O$, $\tilde \xi_0 := O$, and $\tilde \eta_0^\square := O$.
\FOR{$k=0,1,2,\dots$ until convergence}
	\STATE Compute $\tilde P_k$ (corresponding to $X_{k+1}-X_k$) using the primary method.
	\STATE $V_{k+1} = \tilde Q_k(\tilde \xi_k - \tilde\eta_k^\square) + \tilde P_k$
	\STATE Compute QR decomposition: $[\tilde Q_{k+1}, \tilde \xi_{k+1}] = \textbf{qr}(V_{k+1})$. 
	\STATE Compute $\tilde U_{k+1} = A\tilde Q_{k+1}$ with an explicit multiplication by $A$.
	\STATE $\tilde\eta_{k+1}^\square = (\tilde U_{k+1}^\top \tilde U_{k+1})^{-1}(\tilde U_{k+1}^\top S_k)$
	\STATE $Y_{k+1} = Y_k + \tilde Q_{k+1}\tilde\eta_{k+1}^\square,\quad S_{k+1} = S_k - \tilde U_{k+1}\tilde\eta_{k+1}^\square$
	\RETURN $X_{k+1} = Y_{k+1} + \tilde Q_{k+1}(\tilde \xi_{k+1} - \tilde\eta_{k+1}^\square),\quad R_{k+1} = S_{k+1} - \tilde U_{k+1}(\tilde \xi_{k+1} - \tilde\eta_{k+1}^\square)$
\ENDFOR
\end{algorithmic}
\end{algorithm}

\subsection{Effect of approximation norms on the residual gap}\label{subsec2.3}

The residual gap $G_{R_k}$ \eqref{G_R_k} in \Cref{Th1} reduces to 
\begin{align}
	\| G_{R_k} \| < c_x \| A \| \max_{0<j\leq k} \| X_j \| + c_r \max_{0\leq j \leq k} \| R_j \|, \label{G_X_R}
\end{align}
where the coefficients $c_x$ and $c_r$ are small positive scalars of order $\mathbf{u}$ (see also the analysis in \cref{sec4} and \cite{Greenbaum1997SIMAX, GutknechtRozloznik2001BIT}). 
Because Bl-CIRS with the orthonormalization strategy uses the recursion formulas \eqref{Y_S_CIRS} that are essentially the same as \eqref{Rec_X_R_Q}, the bound \eqref{G_X_R} also holds for the smoothed case; that is, $X_j$, $R_j$, and $G_{R_k}$ are replaced by $Y_j$, $S_j$, and $G_{S_k} := (B-AY_k) - S_k$, respectively. 
Note that $\|S_j\|$ decreases monotonically, whereas $\|R_j\|$ can oscillate significantly. 
The residual gap can be suppressed by suppressing $\max_j \|R_j\|$ in the second term in \eqref{G_X_R}. 
However, the bound also depends on $\max_j \|X_j\|$. 
Therefore, we discuss the effect of the residual smoothing on the convergence behavior of the approximation norms. 

From the recursion formula $Y_{k+1} = Y_k + \tilde Q_{k+1} \tilde\eta_{k+1}^\square$, the norm of the updated approximation is bounded as follows: 
\begin{align}
\|Y_k\| - \|\tilde\eta_{k+1}^\square \| \leq \|Y_{k+1}\| \leq \|Y_k\| +  \|\tilde\eta_{k+1}^\square \|. \label{Y_eta}
\end{align}
As $\|S_{k+1}\| \leq \|S_k\|$ holds, we have
\begin{align*}
\|\tilde\eta_{k+1}^\square \| = \|A^{-1}(S_k - S_{k+1})\|  \leq 2 \|A^{-1}\| \|S_k\|, 
\end{align*}
and the above bounds lead to the following inequalities: 
\begin{align*}
\frac{\|Y_k\|}{\|B\|} - 2\tau_k \leq \frac{\|Y_{k+1}\|}{\|B\|} \leq \frac{\|Y_k\|}{\|B\|} + 2\tau_k,\quad \tau_k := \|A^{-1}\| \frac{\|S_k\|}{\|B\|}.
\end{align*}
Here, $\|S_k\|/\|B\|$ ($\leq 1$ for $Y_0 := O$) decreases monotonically with increasing $k$. 
Therefore, if $\|A^{-1}\|$ is modest, then the difference between $\|Y_k\|$ and $\|Y_{k+1}\|$ is also modest and becomes smaller as the iteration proceeds. 
Even at the early stage of iterations, because $\|\eta_k^\square\|$ is small for a large $\|R_k\|$ from \eqref{Bl-SRS}, there may be no large oscillations in $\|Y_k\|$ from \eqref{Y_eta}. 
Thus, the sequence of the approximation norms starting with $\|Y_0\| = 0$ can have a smooth (not necessarily monotonic) convergence behavior in the generic case. 

However, from the equality $Y_k = X^* - A^{-1}S_k$ for $X^* := A^{-1}B$, an upper bound of the approximation norms is given as follows: 
\begin{align*}
\frac{\|Y_k\|}{\|B\|}  \leq \frac{\|X^*\|}{\|B\|} + \tau_k. 
\end{align*}
Therefore, we expect that $\|Y_k\|$ will remain small relative to $\|X^*\|$. 
In the methods without residual smoothing, $S_k$ is replaced by $R_k$ in the definition of $\tau_k$, and the upper bound of $\|X_k\|$ can be magnified by the oscillations in $\|R_k\|$. 

In actual computation, as will be shown in \cref{subsec3.2.2}, $\|Y_k\|$ increases smoothly and approaches $\|X^*\|$, whereas $\|X_k\|$ can oscillate significantly during the iterations. 
This is a strength of the residual smoothing from the perspective of the approximation norms and contributes to preventing a large residual gap.

\subsection{Specific Lanczos-type solver with Bl-CIRS}

We present a specific Lanczos-type solver combined with the proposed Bl-CIRS with the orthonormalization strategy. 
We apply \Cref{alg2} to the Bl-BiCGSTABpQ method, which is a stabilized variant of the well-established method Bl-BiCGSTAB and orthonormalizes the columns of iteration 
matrices \cite{GuennouniJbilouSadok2003ETNA, NakamuraIshikawaKuramashiSakuraiTadano2012}. 
Our derivation below follows those of the smoothed Gl- and Bl-BiCGSTABpQ combined with Gl-CIRS \cite[sections~5.2 and 5.3]{AiharaImakuraMorikuni2022SIMAX}; the resulting algorithm is an extension of \cite[Algorithm~5.3]{AiharaImakuraMorikuni2022SIMAX} for using Bl-CIRS. 

As noted in \cite{AiharaImakuraMorikuni2022SIMAX}, Bl-BiCGSTABpQ updates the approximation $X_k$ and the corresponding residual $R_k$ by two steps at each iteration as follows: 
\begin{align}
\begin{split}
&\text{(BiCG part)}\quad X_k' := X_k + Q_k\alpha_k^\square,\quad R_k' := R_k - (AQ_k)\alpha_k^\square,\\
&\text{(polynomial part)}\quad X_{k+1} = X_k' + \omega_k R_k',\quad R_{k+1} = R_k' - \omega_k (AR_k'), 
\end{split}\label{G_B_X_R}
\end{align}
where $\omega_k \in \mathbb{R}$ is normally determined by minimizing $\|R_{k+1}\|$ and $Q_k \in \mathbb{R}^{n\times s}$ (a column-\hspace{0pt}orthonormalized matrix) is the Q-factor of the QR decomposition of the direction matrix; see below for $\alpha_k^\square \in \mathbb{R}^{s\times s}$. 
It is simple to apply Bl-CIRS to both the residuals in the BiCG and polynomial parts, but this approach requires two additional multiplications by $A$ per iteration. 
Therefore, following the concept in \cite{AiharaImakuraMorikuni2022SIMAX,AiharaKomeyamaIshiwata2019BIT}, we apply Bl-CIRS only to the BiCG part residuals to reduce the computational costs. 
Specifically, we reformulate \eqref{G_B_X_R} as follows: 
\begin{align*}
X_k' = X_{k-1}' + \tilde P_{k-1}, \quad R_k' = R_{k-1}' - A\tilde P_{k-1},\quad \tilde P_{k-1}: = \omega_{k-1} R_{k-1}' + Q_k\alpha_k^\square,
\end{align*}
and apply \Cref{alg2} to the primary sequences $\{X_k'\}$ and $\{R_k'\}$, where $X_{-1}' := X_0$, $R_{-1}' := O$, and $\omega_{-1} := 0$. 
The setting of $\tilde P_k$ is variable depending on the primary sequence to which Bl-CIRS is applied. 
Moreover, to reduce the number of multiplications by $A$, we obtain $\alpha_k^\square$ as a solution of an $s$-dimensional linear system $({Z_0^{\bullet}}^\top Q_k) \alpha_k^\square ={R_0^{\bullet}}^\top R_k$, where $Z_0^{\bullet} := A^\top R_0^{\bullet}$ is computed once and stored in advance. 
Although $AQ_k$ is needed to compute the next direction matrix, it can be provided by solving the system $(AQ_k) \alpha_k^\square = R_k - R_k'$, after $R_k'$ is returned from Bl-CIRS. 
The resulting algorithm requires no additional multiplications by $A$. 

\Cref{alg3} displays the smoothed Bl-BiCGSTABpQ using Bl-CIRS with the orthonormalization strategy. 
In line~6, $\textbf{qf}(\cdot )$ denotes the Q-factor of the QR decomposition of a matrix. 
This algorithm requires two orthonormalizations per iteration; one is for the direction matrix $P_k$ in Bl-BiCGSTABpQ and another is for the auxiliary matrix $V_k$ in Bl-CIRS. 
Lines~8--12 correspond to Bl-CIRS, where $\tilde \zeta_k := \tilde \xi_k - \tilde \eta_k^\square$ is introduced for efficiency. 
The indices of variables (e.g., the number of iterations $k$) are omitted in the actual implementation: however, several variables that can be overwritten by others remain for clarity and readability. 
For instance, the storage for $n$-by-$s$ matrices can be reduced by overwriting the orthonormalized matrices $Q$ and $\tilde Q$ with $P$ and $V$, respectively. 
Moreover, we do not need to compute $X$ and $X'$ if satisfied with $Y$ to capture approximations for \eqref{eq:AX=B}.

\begin{algorithm}[t]
\caption{Smoothed Bl-BiCGSTABpQ using Bl-CIRS with orthonormalization.}\label{alg3}
\begin{algorithmic}[1]
\STATE Select an initial guess $X$ and compute the initial residual $R = B-AX$.
\STATE Set $Y := X$, $S := R$, $\tilde Q := O$, and $\tilde \zeta := O$.
\STATE Select an initial shadow residual $R_0^{\bullet}$ and compute $Z_0^{\bullet} = A^\top R_0^{\bullet}$.
\STATE Set $P := R$, $R' := O$, and $\omega := 0$.
\WHILE{$\|S\| > tol$}
	\STATE $Q = \textbf{qf}(P),\quad \sigma = {Z_0^{\bullet}}^\top Q$
	\STATE Solve $\sigma \alpha = {R_0^{\bullet}}^\top R$ for $\alpha$.
	\STATE $\tilde P = \omega R' + Q\alpha,\quad V = \tilde Q \tilde \zeta + \tilde P$
	\STATE $[\tilde Q, \tilde \xi] = \textbf{qr}(V),\quad \tilde U = A \tilde Q$
	\STATE Solve $(\tilde U^\top \tilde U) \tilde \eta = \tilde U^\top \tilde S$ for $\tilde \eta$. 
	\STATE $Y = Y + \tilde Q \tilde \eta ,\quad S = S - \tilde U \tilde \eta,\quad \tilde \zeta = \tilde \xi - \tilde \eta$
	\STATE $X' = Y + \tilde Q \tilde \zeta,\quad R' = S - \tilde U \tilde \zeta$
	\STATE Solve $V' \alpha = R - R'$ for $V'$.
	\STATE $T = A R',\quad \omega = \langle R', T\rangle_F / \langle T, T \rangle_F$
	\STATE $X = X' + \omega R',\quad R = R' - \omega T$
	\STATE Solve $\sigma \beta = {R_0^{\bullet}}^\top T$ for $\beta$.
	\STATE $P = R - (Q - \omega V')\beta$
\ENDWHILE
\end{algorithmic}
\end{algorithm}

\section{Numerical experiments}\label{sec3}

In this section, numerical experiments demonstrate that \Cref{alg3} can effectively reduce the residual gap and improve the attainable accuracy of the approximations. 
We first describe common settings of the experiments, and then show the numerical results, dividing them into several viewpoints. 

\subsection{Computational conditions}

Numerical calculations were conducted by double-precision floating-point arithmetic on a PC (Intel Core i7-1185G7 CPU with 32 GB of RAM) equipped with MATLAB R2021a. 

The iterations were started with $X_0 := O$ and were stopped when the relative residual norms (i.e., $\|S_k\|/\|B\|$ and $\|R_k\|/\|B\|$ for the methods with and without residual smoothing, respectively) were less than $10^{-15}$. 
The maximum number of iterations was set to $n$. 
Following \cite{AiharaImakuraMorikuni2022SIMAX}, the test matrices $A$ were given from the \mbox{SuiteSparse} Matrix Collection \cite{DavisHu2011ACM}. 
\Cref{table_matrix} displays the dimension ($n$), number of nonzero entries ($nnz$), maximum number of nonzero entries per row ($m$), and 2-norm condition number ($\text{cond}_2(A)$). 
The right-hand side $B$ was given by a random matrix with setting $s = 16, 32$, and the initial shadow residual $R_0^{\bullet}$ was set to $R_0$ ($= B$). 

We used Bl-BiCGSTABpQ as an underlying solver and compared its five variations listed below.  
\begin{description}
\item[Solver~1:] No residual smoothing (the simple Bl-BiCGSTABpQ). 
\item[Solver~2:] Using Bl-SRS (\eqref{Bl-SRS} with \eqref{Bl-eta}). 
\item[Solver~3:] Using Bl-CIRS without orthonormalization (\Cref{alg1}). 
\item[Solver~4:] Using Bl-CIRS with orthonormalization (\Cref{alg2}). 
\item[Solver~5:] Using Gl-CIRS (\cite[Algorithm~5.3]{AiharaImakuraMorikuni2022SIMAX}). 
\end{description}
Note that Solver~4 is our proposed method and corresponds to \Cref{alg3}. 
In the implementation on MATLAB, the slash or backslash command was used for solving the small linear systems (e.g., lines~7, 10, 13, and 16 in \Cref{alg3}), and the QR decomposition used in the solvers was performed by the $\textbf{qr}(\cdot)$ command. 
All the solvers require two multiplications by $A$ per iteration.

\begin{table}[t]
\centering
\caption{Characteristics of test matrices \cite{DavisHu2011ACM}.}\label{table_matrix}
\begin{tabular}{lrrrr} \toprule
\bf Matrix    & $n$    & $nnz$ & $m$ & $\text{cond}_2(A)$ \\ \midrule
cdde2    & 961    & 4,681  & 5    & 5.48e+01 \\
pde2961 & 2,961 & 14,585 & 5    & 6.42e+02 \\ 
bfwa782 & 782    & 7,514  & 24  & 1.74e+03 \\ \bottomrule
\end{tabular}
\end{table}

\subsection{Effectiveness of the orthonormalization strategy}\label{subsec3.2}

Here, we present the numerical results of Solvers~1--5 applied to \eqref{eq:AX=B}. 
\Cref{Results} displays the number of iterations ({\bf Iter.}), computation time ({\bf Time (s)}),  and true relative residual norm ({\bf True~res.}) at the termination of the iterations, where the true relative residual norm was evaluated by\break $\|B-AX_k\|/\|B\|$ and $\|B-AY_k\|/\|B\|$ for Solver~1 and Solvers~2--5, respectively. 
The symbol $\dagger$ denotes no convergence within $n$ iterations. 

We observe from \Cref{Results} that the solvers, except for Solver~3, converge at a similar speed for each matrix, but Solvers~4 and 5 generate more accurate approximations than the others with respect to the true relative residual norm. 
The computation time is only a guide because the provided algorithms were implemented naively on MATLAB, and their efficiencies depend on the computational equipment and implementation. 
We will not discuss this point further. 
In the following, we focus on the residual gap and discuss the feature of Bl-CIRS for each comparison item.

\begin{table}[t]
\centering
\caption{Number of iterations, computation time, and true relative residual norm of Solvers~1--5 for test matrices.}\label{Results}
\begin{tabular}{cccccccc} \toprule
 & & \multicolumn{3}{c}{$s=16$} & \multicolumn{3}{c}{$s=32$} \\ \cmidrule{3-8}
\bf Matrix & \bf Solver & \bf Iter. & \bf Time (s) & \bf True res. & \bf Iter. & \bf Time (s) & \bf True res. \\ \midrule
\multirow{5}{*}{cdde2} & \#1 & 55 & 0.026 & 1.58e$-$13 & 39 & 0.038 & 6.91e$-$11\\ 
							& \#2 & 54 & 0.028 & 2.29e$-$13 & 38 & 0.045 & 6.91e$-$11\\
							& \#3 & 55 & 0.042 & 2.83e$-$13 & 39 & 0.054 & 4.00e$-$11\\
							& \#4 & 55 & 0.046 & 7.78e$-$15 & 38 & 0.059 & 6.71e$-$15\\
							& \#5 & 56 & 0.038 & 7.41e$-$15 & 39 & 0.050 & 6.13e$-$15\\ \midrule
\multirow{5}{*}{pde2961} & \#1 & 96 & 0.116 & 9.72e$-$11 & 64 & 0.174 & 5.05e$-$13\\ 
							& \#2 & 94 & 0.124 & 1.09e$-$10 & 62 & 0.180 & 7.20e$-$11\\
							& \#3 & 100 & 0.157 & 7.77e$-$11 & 883 & 3.298 & 2.21e$-$11\\
							& \#4 & 93 & 0.156 & 6.53e$-$14 & 60 & 0.243 & 5.33e$-$14\\
							& \#5 & 90 & 0.119 & 6.19e$-$14 & 65 & 0.204 & 5.21e$-$14\\ \midrule
\multirow{5}{*}{bfwa782} & \#1 & 61 & 0.029 & 1.21e$-$12 & 40 & 0.036 & 1.64e$-$12\\ 
							& \#2 & 61 & 0.033 & 7.44e$-$11 & 37 & 0.037 &  4.26e$-$11\\
							& \#3 & $\dagger$ & - & 4.85e$-$11 & $\dagger$ & - & 2.47e$-$11\\
							& \#4 & 62 & 0.042 & 7.69e$-$14 & 36 & 0.051 & 6.41e$-$14\\
							& \#5 & 62 & 0.037 & 7.44e$-$14 & 38 & 0.046 & 6.10e$-$14\\ \bottomrule
\end{tabular}
\end{table}

\subsubsection{Comparison of the residual gap}

We first compare the convergence behaviors of the residual norms for Solvers~1--4. 
\Cref{Ex1} shows the histories of the relative norms of the recursively updated and true residuals of Solvers~1--4 for cdde2 with $s=32$. 
The plots indicate the number of iterations on the horizontal axis versus $\log_{10}$ of the relative residual norm on the vertical axis. 

From \Cref{Ex1}, we observe the following. 
As noted in \cite{AiharaImakuraMorikuni2022SIMAX}, Solver~1 (i.e., the original Bl-BiCGSTABpQ) has a large peak of the residual norms, leading to a large residual gap; that is, the true residual norms stagnate at a certain level of accuracy while the recursively updated residual norms approach zero. 
Solvers~2--4 (i.e., the methods with residual smoothing) have a smooth convergence behavior. 
However, Solvers~2 and 3 have still a large residual gap and their true residual norms stagnate at the same order of magnitude as Solver~1, whereas only Solver~4 has a smaller residual gap and generates more accurate approximations regarding the true residual norm. 
In addition, Solver~3 (using Bl-CIRS without the orthonormalization) has numerical stability problems such that the convergence often deteriorates as shown in \Cref{Results}. 
These results suggest that orthonormalization is crucial in stabilizing Bl-CIRS and in avoiding the increase of the residual gap. 
Similar observations were obtained as well for other matrices and several right-hand sides.

\begin{figure}[t]
	\begin{minipage}{0.49\textwidth}
		\centering
		\includegraphics[scale=0.2]{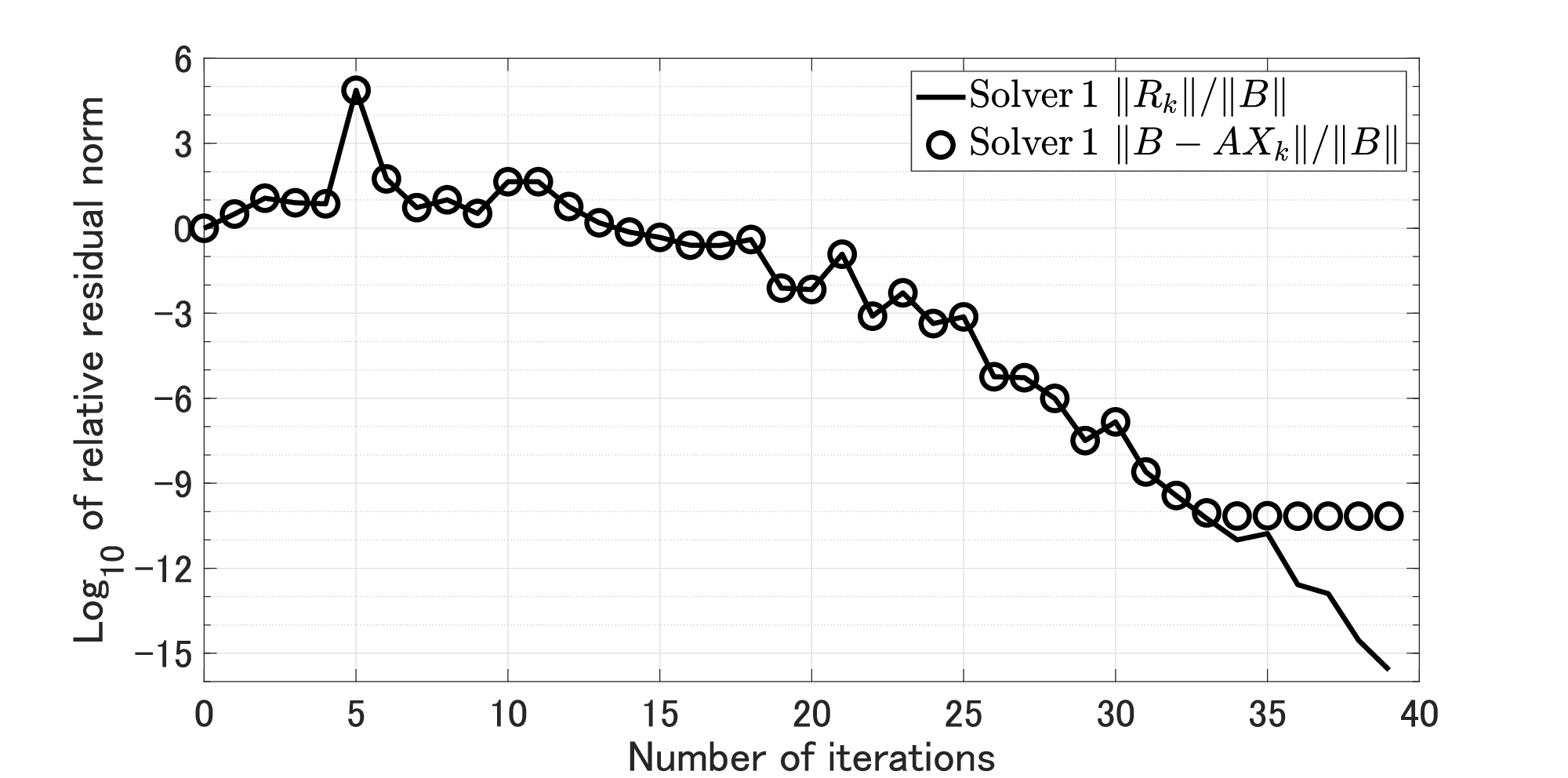}
		\subcaption{Solver~1.}\label{Ex1_1}
	\end{minipage}
	\begin{minipage}{0.49\textwidth}
		\centering
		\includegraphics[scale=0.2]{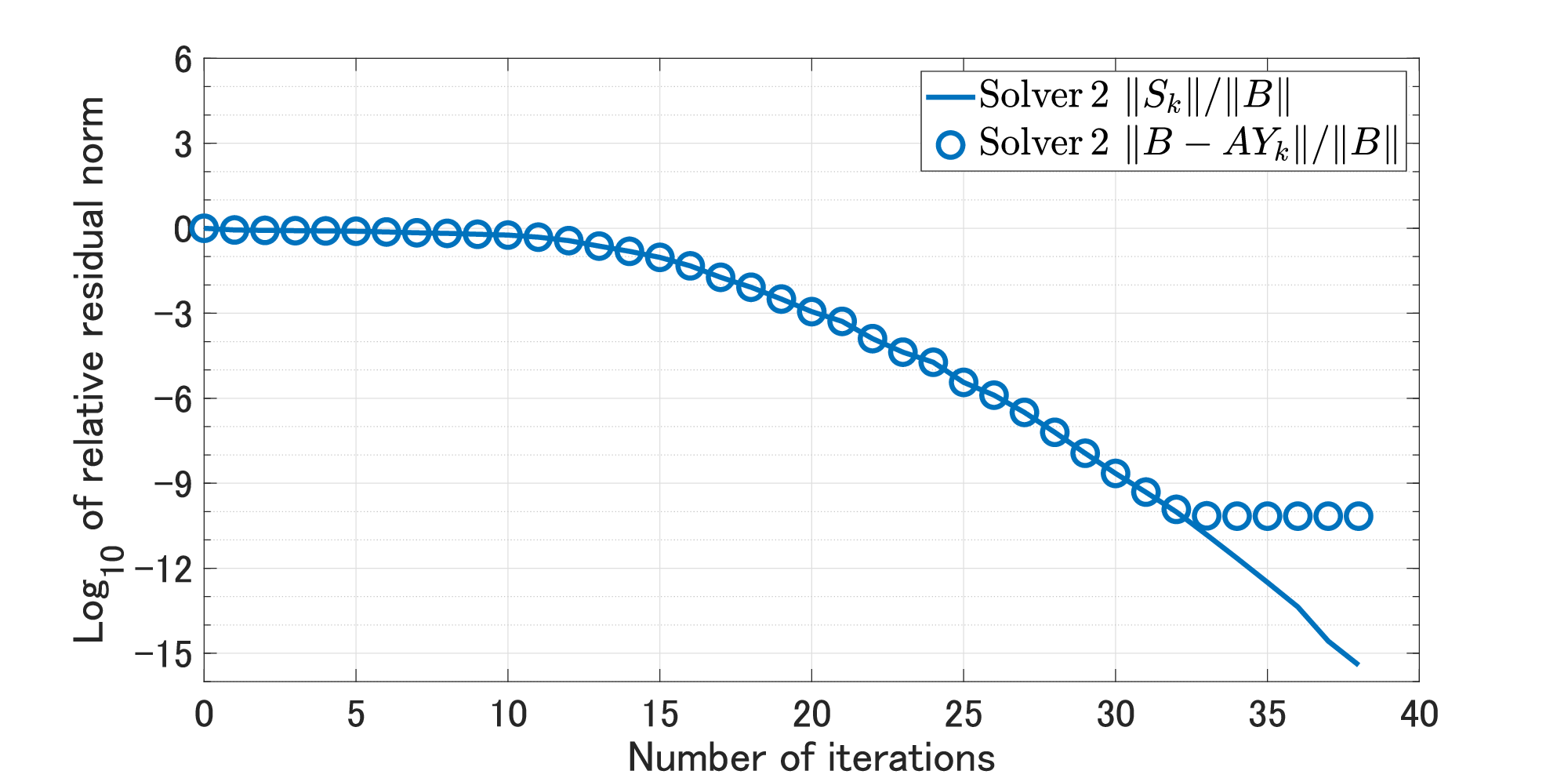}
		\subcaption{Solver~2.}\label{Ex1_2}
	\end{minipage}
	\\
	\begin{minipage}{0.49\textwidth}
		\centering
		\includegraphics[scale=0.2]{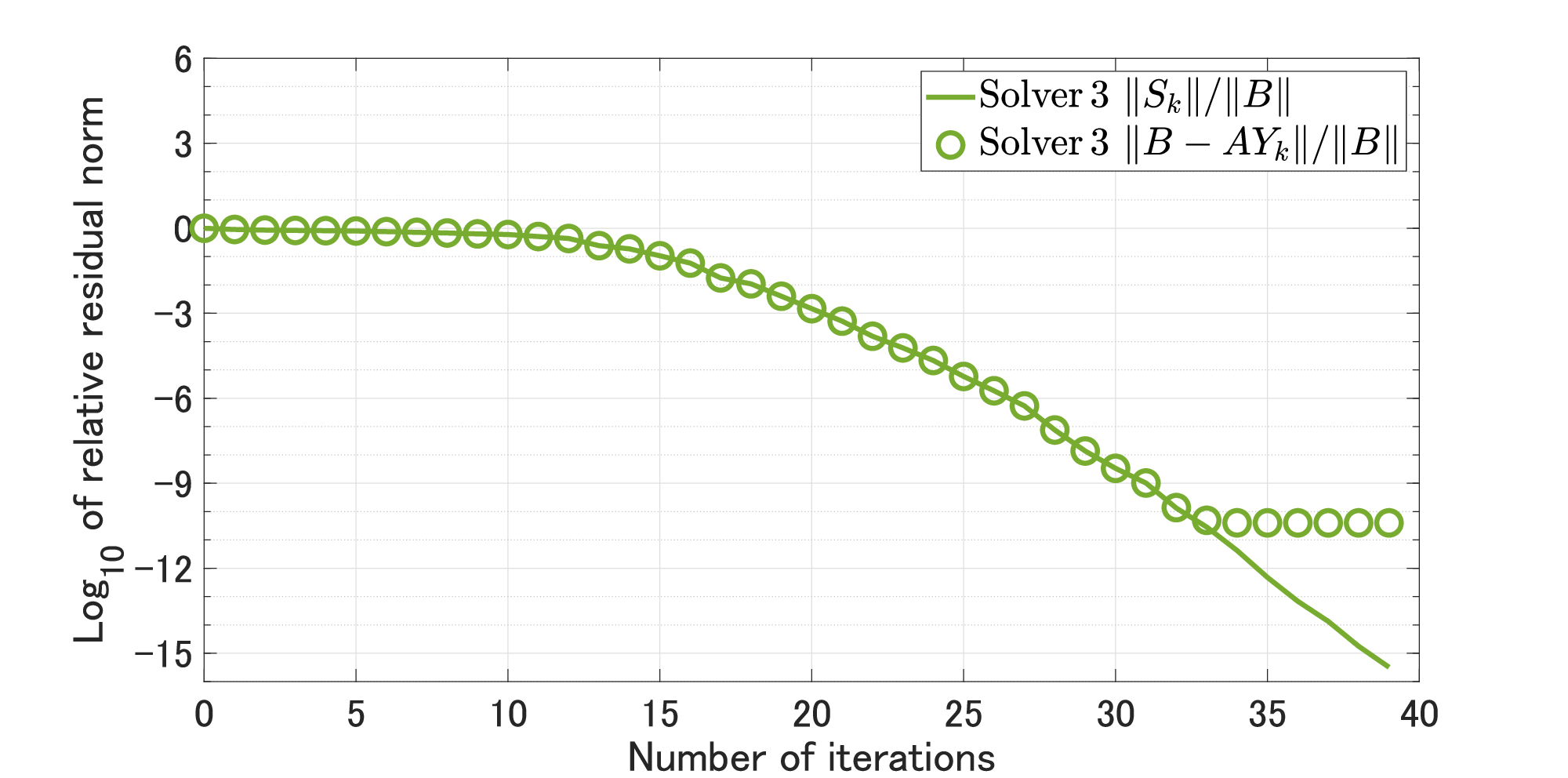}
		\subcaption{Solver~3.}\label{Ex1_3}
	\end{minipage}
	\begin{minipage}{0.49\textwidth}
		\centering
		\includegraphics[scale=0.2]{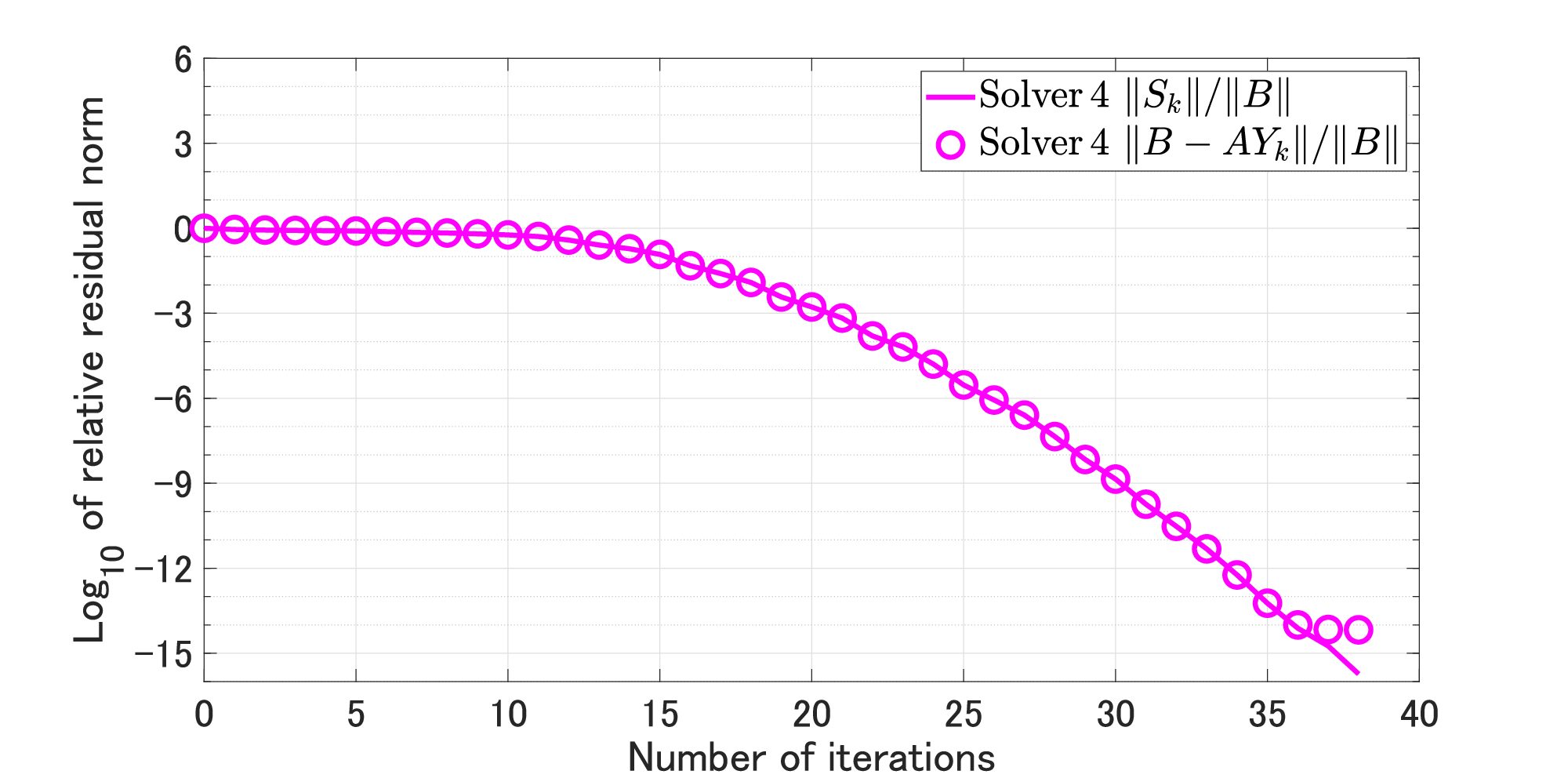}
		\subcaption{Solver~4.}\label{Ex1_4}
	\end{minipage}
	\caption{Convergence histories of Solvers~1--4 for cdde2 with $s=32$.}\label{Ex1}
\end{figure}

\subsubsection{Convergence behavior of approximation norms}\label{subsec3.2.2}

Furthermore, we examine the convergence behavior of the approximation norms with and without residual smoothing. 
In particular, we present examples in which $\max_j \|R_j\| \leq \max_j \|X_j\|$ holds and discuss the influence of the approximation norms on the residual gap. 

\Cref{Ex2_1,Ex2_2} show the convergence histories of the relative residual and approximation norms of Solvers~1 and 4 for pde2961 and bfwa782, respectively, where $s=16$. 
To see the oscillations in the residual and approximation norms precisely, the number of matrix-matrix multiplications (MMs) with $A$ is plotted on the horizontal axis, and we illustrate the norms corresponding to the Bi-CG and polynomial parts alternately for Solver~1 (cf.~\eqref{G_B_X_R}). 
The attainable accuracy, that is, the final size of the true relative residual norms at termination, is indicated with a dashed line with the same color as the corresponding residual norms. 

From \Cref{Ex2_1,Ex2_2}, we observe the following: 
Solver~4 has a smooth convergence behavior with respect to the residual norms $\|S_k\|$ but approximation norms $\|Y_k\|$, whereas Solver~1 exhibits the oscillations in $\|X_k\|$ along with $\|R_k\|$. 
As described in \Cref{Ex2_1}, the loss of attainable accuracy seems to be determined by the maximum value of the approximation norms; for instance, the final accuracy $\|B-AX_k\|/\|B\| = 9.72\times 10^{-11}$ at termination of Solver~1 corresponds to the order of $\max_j \|X_j\|/\|B\| \times 10^{-15}$ ($= 9.03\times 10^4 \times 10^{-15}$). 
This result follows the residual gap evaluation~\eqref{G_X_R}. 
In Solver~4, because it holds that $\max_j \|S_j\|/\|B\| = \|S_0\|/\|B\| = 1$ for $Y_0 := O$, the size of $\|Y_k\|$ directly affects the residual gap. 
Although $\|Y_k\|$ is not necessarily increasing monotonically as described in \Cref{Ex2_2}, it mostly behaves smoothly and the maximum value is almost the same order of magnitude as $\|X^*\|$. 
These observations follow the discussions in \cref{subsec2.3}. 
Thus, we conclude that Bl-CIRS effectively suppresses the residual gap via the residual and approximation norm. 

\begin{figure}[t]
		\centering
		\includegraphics[scale=0.3]{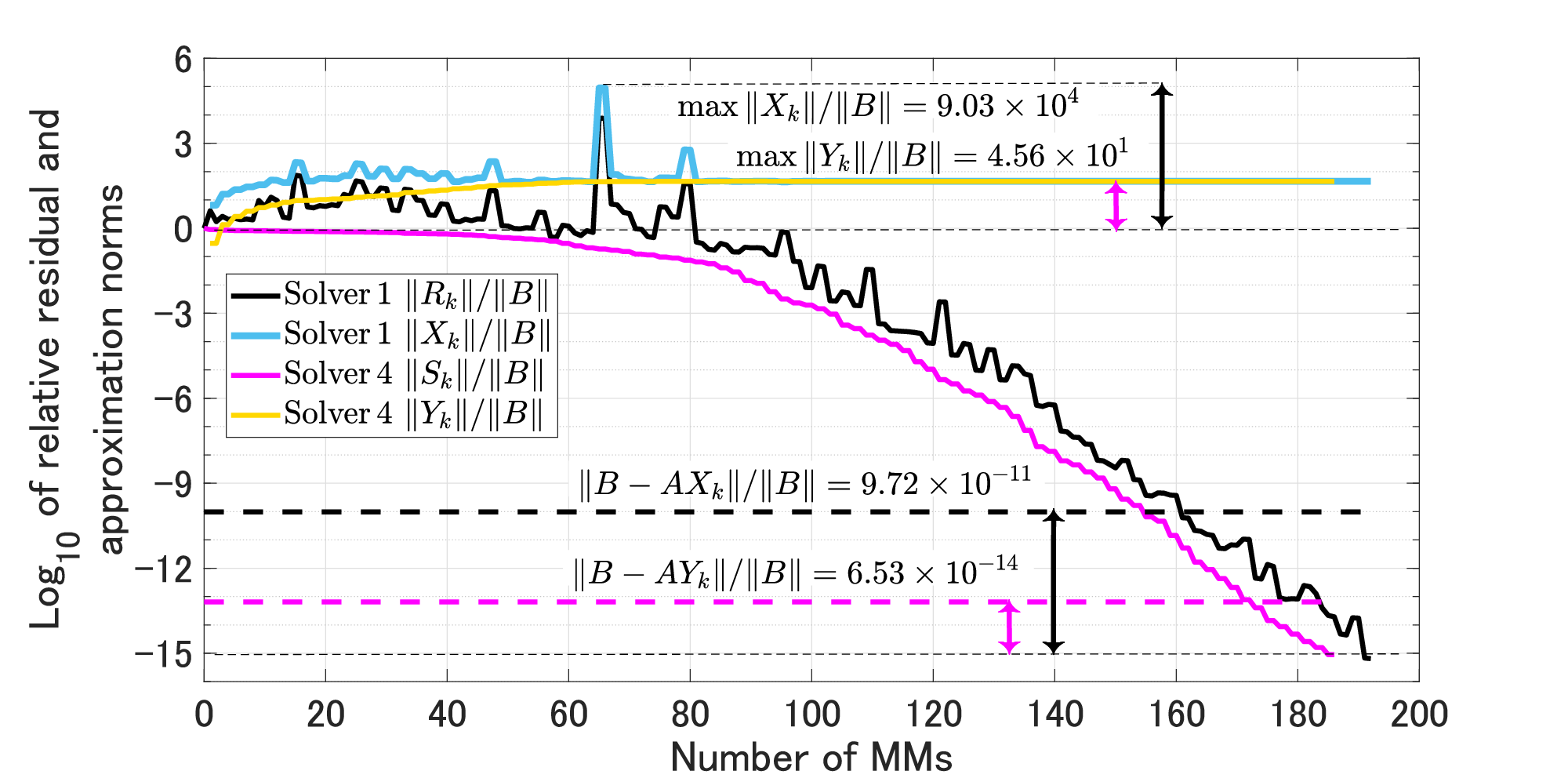}
		\caption{Convergence histories of the relative residual and approximation norms of Solvers~1 and 4 for pde2961 with $s=16$.}\label{Ex2_1}
		\centering
		\includegraphics[scale=0.3]{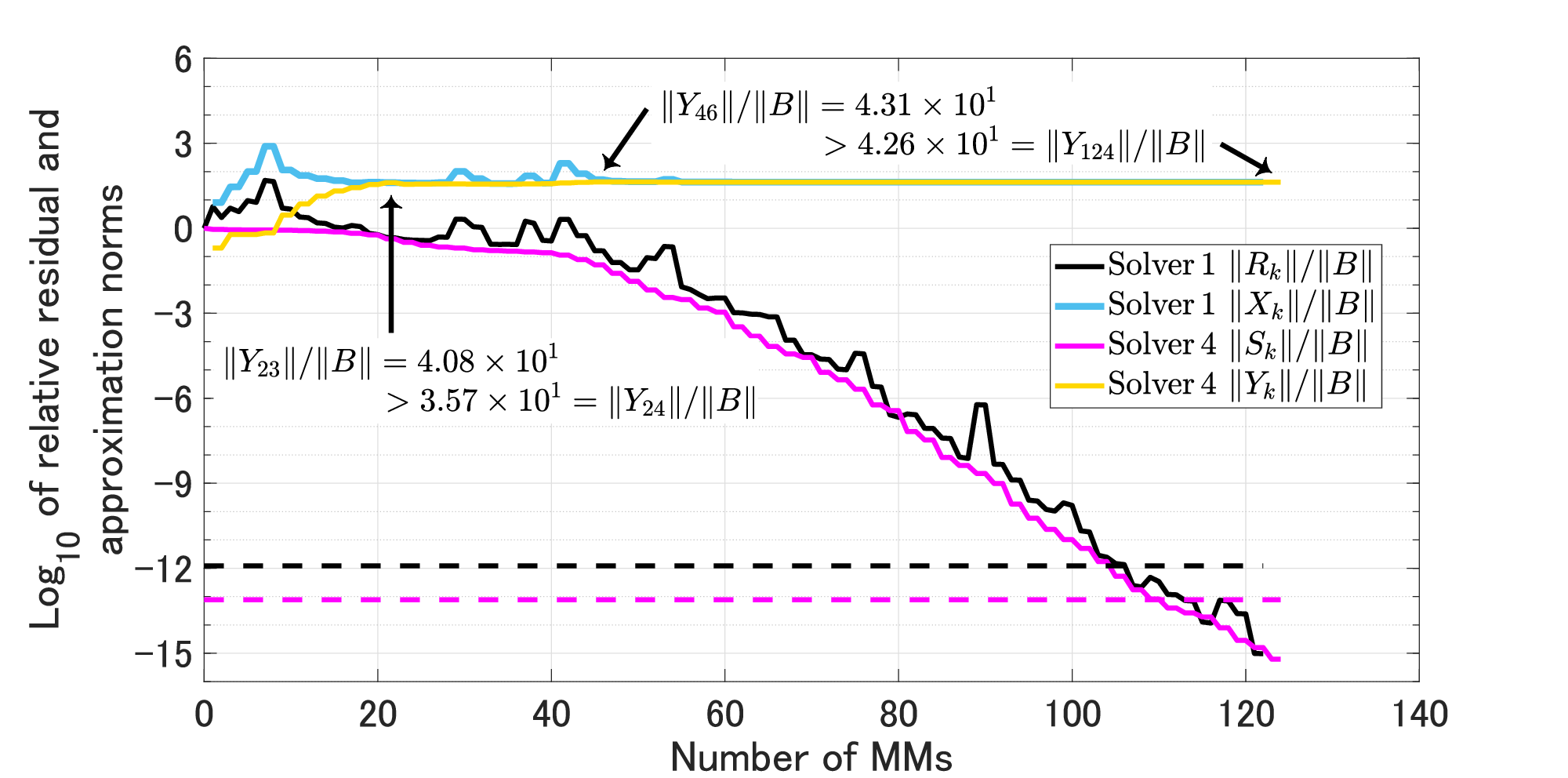}
		\caption{Convergence histories of the relative residual and approximation norms of Solvers~1 and 4 for bfwa782 with $s=16$.}\label{Ex2_2}
\end{figure}

\subsubsection{Comparison between Bl-CIRS and Gl-CIRS}

We finally compare the convergence behaviors of Solvers~4 and 5 to investigate the difference between Bl-CIRS and Gl-CIRS. 
\Cref{Ex3} shows the convergence histories of the relative residual norms of Solvers~4 and 5 for all the combinations of the test matrices and the number of $s$. 
The number of iterations and $\log_{10}$ of the relative residual norm are plotted on the horizontal and vertical axes, respectively. 

From \Cref{Results} and \Cref{Ex3}, we observe the following. 
The residual gaps of Solvers~4 and 5 are sufficiently small and their approximations attain the same level of accuracy. 
However, Solver~4 exhibits even smoother convergence behavior than Solver~5, and the convergence speed of Solver~4 is slightly faster in several cases. 
\Cref{Ex3_3} shows that Solver~4 does not always offer smaller residual norms than Solver~5. 

As seen in the comparisons between the global- and block-type solvers, Gl-CIRS and Bl-CIRS have their strengths. 
For instance, Gl-CIRS requires smaller computational costs per iteration than Bl-CIRS, but Bl-CIRS could be more efficient in parallel. 
Bl-CIRS excels in smoothing residual norms as shown above, while Gl-CIRS would be easier to apply to more general matrix equations (cf.~\cite{AiharaImakuraMorikuni2022SIMAX}). 
Therefore, it is difficult to state which between Gl-CIRS and Bl-CIRS is better. 

\begin{figure}[t]
	\begin{minipage}{0.49\textwidth}
		\centering
		\includegraphics[scale=0.2]{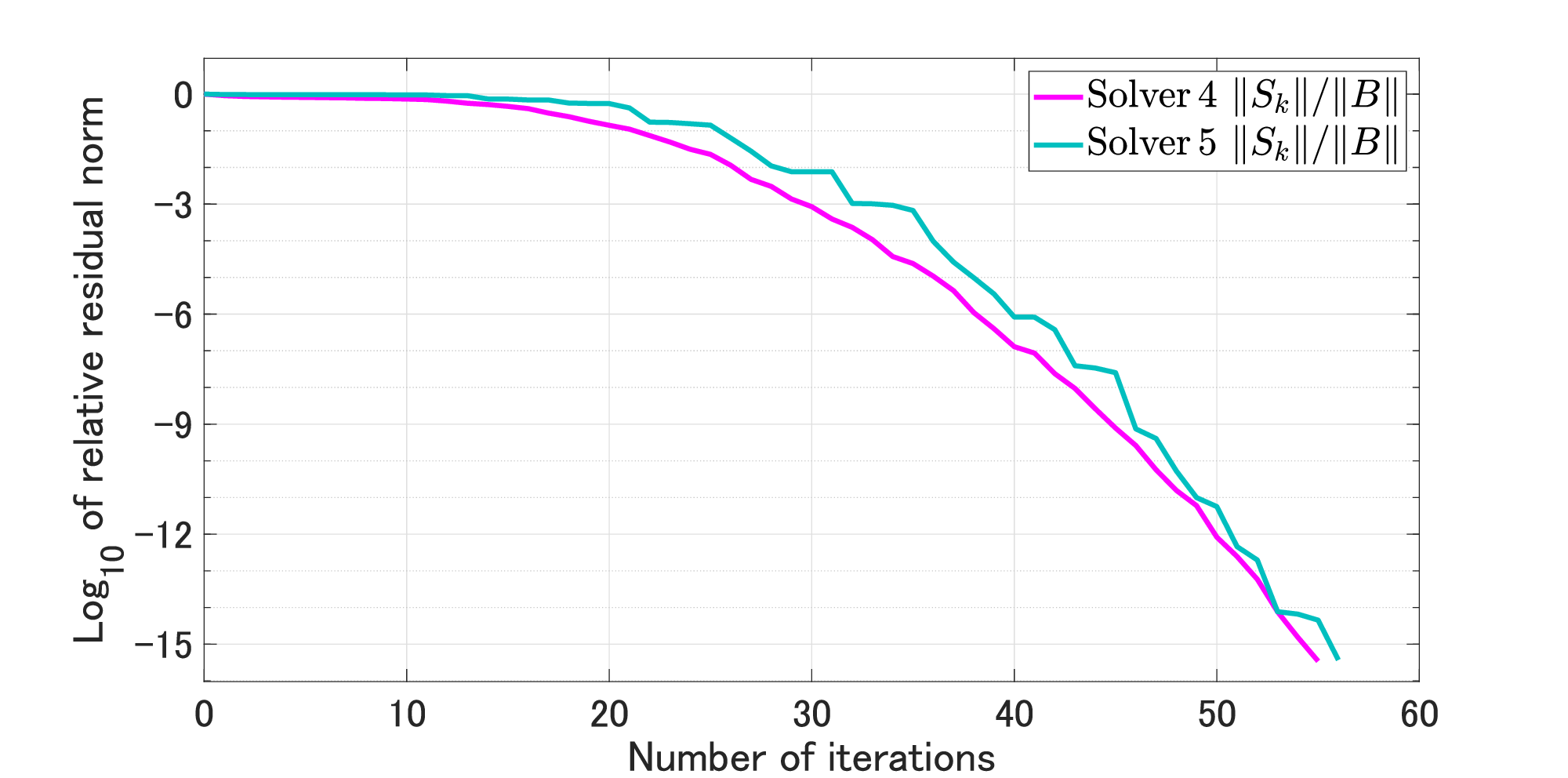}
		\subcaption{cdde2 with $s=16$.}\label{Ex3_1}
	\end{minipage}
	\begin{minipage}{0.49\textwidth}
		\centering
		\includegraphics[scale=0.2]{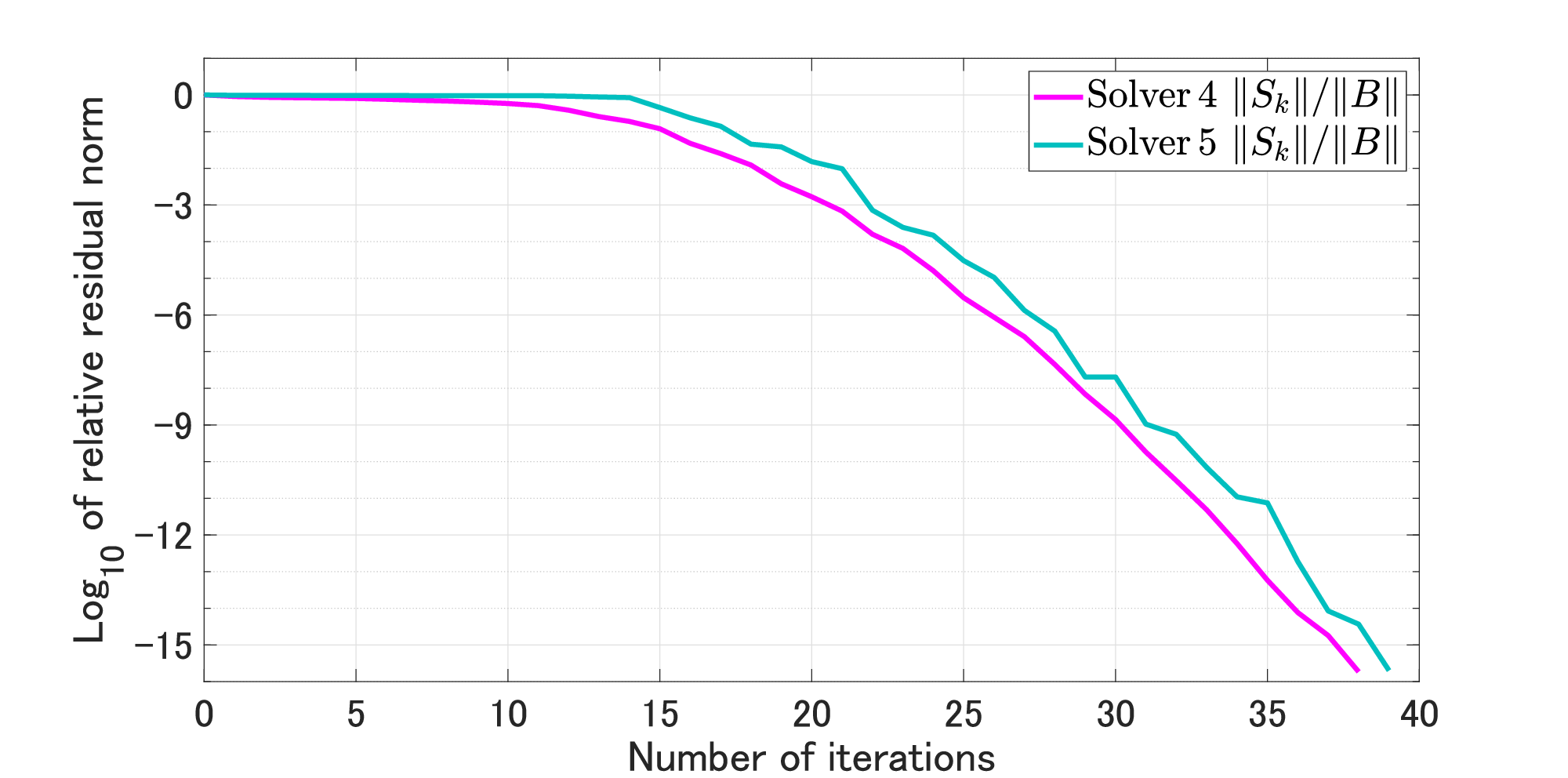}
		\subcaption{cdde2 with $s=32$.}\label{Ex3_2}
	\end{minipage}
	\\
	\begin{minipage}{0.49\textwidth}
		\centering
		\includegraphics[scale=0.2]{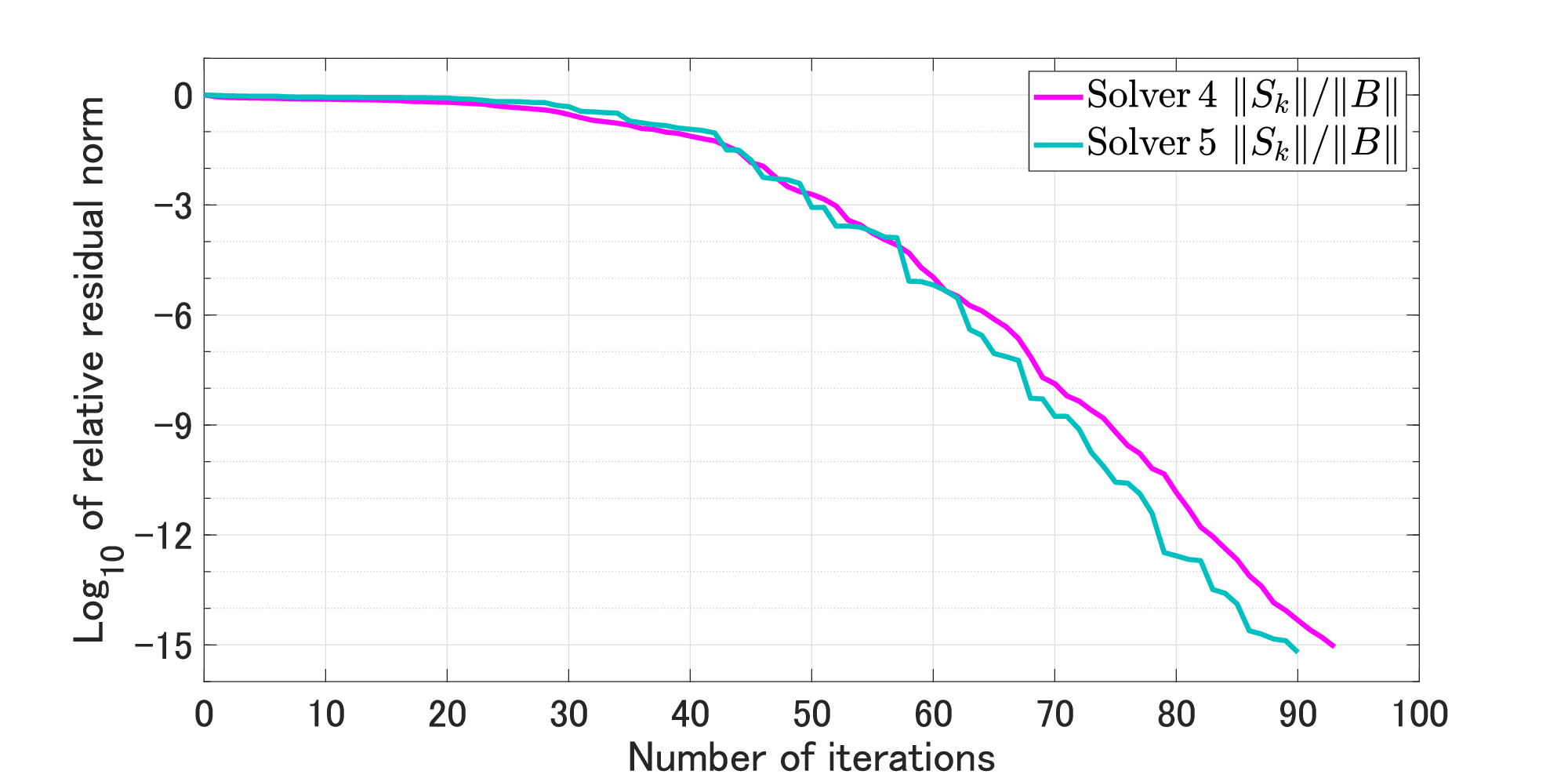}
		\subcaption{pde2961 with $s=16$.}\label{Ex3_3}
	\end{minipage}
	\begin{minipage}{0.49\textwidth}
		\centering
		\includegraphics[scale=0.2]{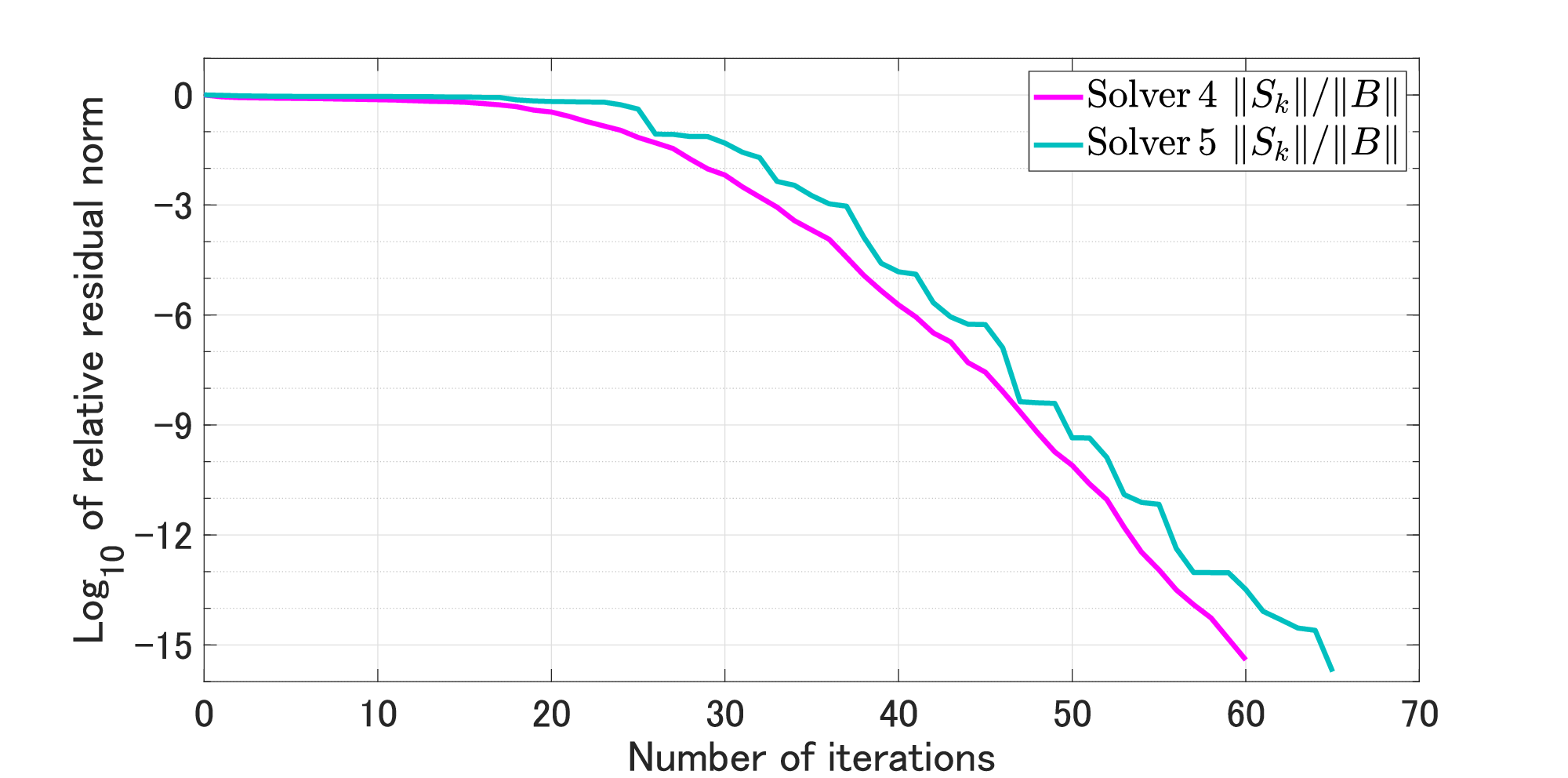}
		\subcaption{pde2961 with $s=32$.}\label{Ex3_4}
	\end{minipage}
	\\
	\begin{minipage}{0.49\textwidth}
		\centering
		\includegraphics[scale=0.2]{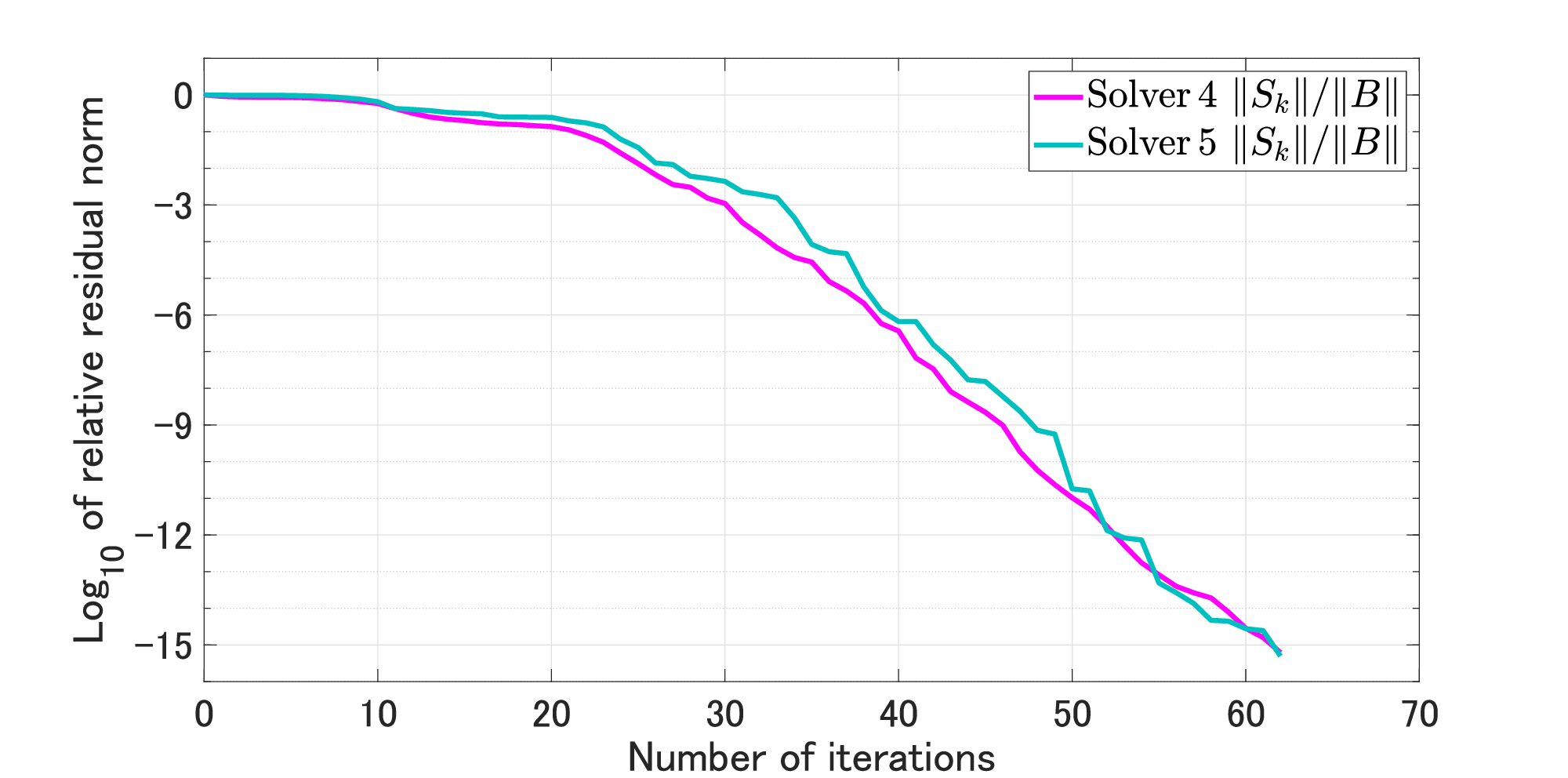}
		\subcaption{bfwa782 with $s=16$.}\label{Ex3_5}
	\end{minipage}
	\begin{minipage}{0.49\textwidth}
		\centering
		\includegraphics[scale=0.2]{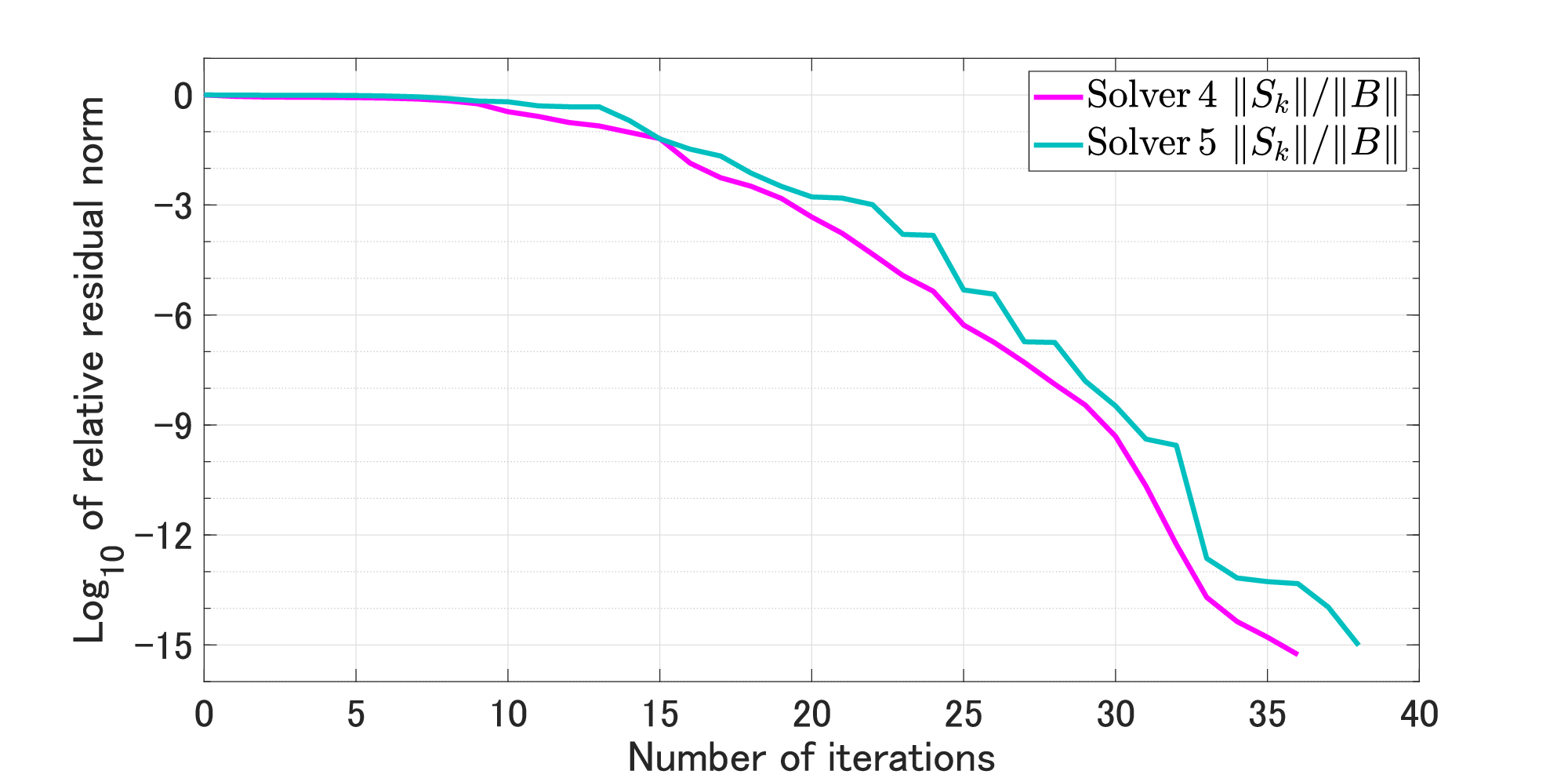}
		\subcaption{bfwa782 with $s=32$.}\label{Ex3_6}
	\end{minipage}
	\caption{Convergence histories of Solvers~4 and 5 for test matrices.}\label{Ex3}
\end{figure}

\section{Rounding error analysis for the residual gap}\label{sec4}

In this section, a detailed rounding error analysis shows that Bl-CIRS with the orthonormalization strategy suppresses the residual gap. 
The new results evaluate the residual gap fully in finite precision arithmetic and complement the results in \cite{AiharaImakuraMorikuni2022SIMAX} when using backward stable orthonormalization processes; Givens rotations and Householder transformations. 
A more general case is also presented in \Cref{Appendix}. 

We first describe our main result on the residual gap; the proof will be presented in \cref{sec4.3}. 
\begin{theorem}\label{add_th}
In finite precision arithmetic, let $X_k \in \mathbb{F}^{n \times s}$ and $R_k \in \mathbb{F}^{n \times s}$ be the $k$th approximation and residual, respectively, generated by the recursion formulas 
\begin{align}
X_k = X_{k-1} + \widehat{Q}_{k-1} \alpha_{k-1}^\square, \quad R_{k} = R_{k-1} - (A\widehat{Q}_{k-1}) \alpha_{k-1}^\square, \label{eq:recur_form}
\end{align}
where $\widehat{Q}_{k-1}$ is a Q-factor of the direction matrix $P_{k-1}$ obtained from the QR decomposition with Givens rotations or Householder transformations. 
Then, with $X_0 = O$, the norm of the residual gap~$G_{R_k} := (B-AX_k) - R_k$ is bounded as follows: 
\begin{align}
\| G_{R_k} \| < \left(8 \sqrt{s} \gamma_{m+3s} + \gamma_1\right) k \| A \| \max_{0<i\leq k} \| X_i \|  + k \gamma_1 \max_{0< i\leq k} \| R_i \|. \label{eq:G_Rk}
\end{align}
Here, we refer to the subsections below for the values of $\gamma$ and the related assumptions. 
\end{theorem}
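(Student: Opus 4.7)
The plan is to extend the telescoping argument used for Theorem~\ref{Th1} (see \cite{AiharaImakuraMorikuni2022SIMAX}) by carefully absorbing the backward error of the Givens/Householder QR factorization into the per-step error bound. Since $X_0=O$ implies $G_{R_0}=0$, the idea is to derive a recursion of the form $G_{R_k}=G_{R_{k-1}}+\Delta_k$ and then sum $\|\Delta_k\|$ from $k=1$ to $k$. The crucial algebraic point is that the ``exact'' increments $-A\widehat{Q}_{k-1}\alpha_{k-1}^\square$ coming from $X_k-X_{k-1}$ and $R_{k-1}-R_k$ cancel, so what remains is only floating-point dust.

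First I would write the computed recurrences as
\begin{align*}
\hat X_k &= \hat X_{k-1} + \widehat{Q}_{k-1}\alpha_{k-1}^\square + \Delta X_k,\\
\hat R_k &= \hat R_{k-1} - \operatorname{fl}(A\widehat{Q}_{k-1})\,\alpha_{k-1}^\square + \Delta R_k,
\end{align*}
with $\operatorname{fl}(A\widehat{Q}_{k-1})=A\widehat{Q}_{k-1}+F_k$, $\|F_k\|\le\gamma_m\|A\|\|\widehat{Q}_{k-1}\|$ from the sparse matrix–matrix product (at most $m$ nonzeros per row of $A$), and $\|\Delta X_k\|\le\gamma_1(\|\hat X_{k-1}\|+\|\widehat{Q}_{k-1}\alpha_{k-1}^\square\|)$, $\|\Delta R_k\|\le\gamma_1(\|\hat R_{k-1}\|+\|\operatorname{fl}(A\widehat{Q}_{k-1})\alpha_{k-1}^\square\|)$ from the SAXPY-type updates. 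Subtracting these and using the definition \eqref{G_R_k} gives the clean identity
\begin{equation*}
G_{R_k}-G_{R_{k-1}} = F_k\alpha_{k-1}^\square - A\,\Delta X_k - \Delta R_k,
\end{equation*}
so the step error is controlled entirely by $\|F_k\|\|\alpha_{k-1}^\square\|$, $\|A\|\|\Delta X_k\|$, and $\|\Delta R_k\|$.

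Next I would invoke the backward stability of Givens rotations or Householder transformations: the computed $\widehat{Q}_{k-1}$ satisfies $\widehat{Q}_{k-1}=\bar Q_{k-1}+E_{k-1}$ where $\bar Q_{k-1}^{\top}\bar Q_{k-1}=I_s$ and $\|E_{k-1}\|\le c\,s\,\mathbf{u}$ (this is where the ``$3s$'' in $\gamma_{m+3s}$ enters, after combining with the $\gamma_m$ term from the matrix product via the standard identity $(1+\gamma_m)(1+\gamma_{3s})\le 1+\gamma_{m+3s}$). Together with $\|\bar Q_{k-1}\|=\sqrt{s}$, this gives $\|\widehat{Q}_{k-1}\|\le\sqrt{s}(1+O(\mathbf{u}))$ and, more importantly, $\|\alpha_{k-1}^\square\|\le\|\widehat{Q}_{k-1}^{+}\|\,\|\hat X_k-\hat X_{k-1}-\Delta X_k\|\lesssim\|\hat X_k\|+\|\hat X_{k-1}\|\le 2\max_{0<i\le k}\|X_i\|$. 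Plugging these into $\|F_k\alpha_{k-1}^\square\|$ yields a term of order $2\sqrt{s}\,\gamma_{m+3s}\|A\|\max_i\|X_i\|$; similarly $\|A\,\Delta X_k\|\le\gamma_1\|A\|(2\max_i\|X_i\|)$ and $\|\Delta R_k\|\le\gamma_1\max_i\|R_i\|$ (after accounting for the fact that $\|\operatorname{fl}(A\widehat{Q}_{k-1})\alpha_{k-1}^\square\|\le\|\hat R_{k-1}-\hat R_k\|+O(\mathbf{u})\le 2\max_i\|R_i\|$).

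Summing over $i=1,\ldots,k$, merging all $\|A\|\max_i\|X_i\|$ contributions (the $8\sqrt{s}$ reflecting a conservative aggregation of the $2\sqrt s$ terms with constants hidden in $\gamma_1$) and the $\max_i\|R_i\|$ contributions, produces the bound \eqref{eq:G_Rk}. The main obstacle I expect is the bound on $\|\alpha_{k-1}^\square\|$: because $\widehat{Q}_{k-1}$ is only \emph{nearly} orthonormal, one must ensure that $\|\widehat{Q}_{k-1}^{+}\|$ remains $1+O(\mathbf{u})$ rather than deteriorating, which is precisely what the backward stability of Givens/Householder QR provides, and which fails in general for the Gram–Schmidt-type processes analyzed in \Cref{Appendix}. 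A secondary nuisance is tracking the $\sqrt{s}$ factors correctly when moving between spectral and Frobenius norms for the rectangular factor $\widehat{Q}_{k-1}$; this is what distinguishes the block case from the global case of \cite{AiharaImakuraMorikuni2022SIMAX}.
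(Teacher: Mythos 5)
Your proposal is correct in outline and follows essentially the same route as the paper: telescope the gap through per-step local errors, bound $\|\alpha_{k-1}^\square\|$ by roughly $\|X_{k-1}\|+\|X_k\|$ using the near-orthonormality of the Givens/Householder Q-factor (the paper's \Cref{lm:local_error} does this via the perturbation form \eqref{eq:Q} and a rearrangement rather than a pseudoinverse bound, but the mechanism is identical, and your remark that this is precisely what fails for Gram--Schmidt matches \Cref{Appendix}), and then sum. The only caveats are bookkeeping: the paper's addition model \eqref{eq:fl_matrix_sum} bounds the error relative to the exact sum, which is what yields the clean coefficient $\gamma_1\|R_k\|$ (your looser $\gamma_1(\|\hat R_{k-1}\|+\|\mathrm{fl}(A\widehat Q_{k-1})\alpha_{k-1}^\square\|)$ would roughly triple that constant), and the ``$3s$'' in $\gamma_{m+3s}$ actually collects three $\gamma_s$ factors from the products $\widehat Q_{k-1}\alpha_{k-1}^\square$ and $\mathrm{fl}(A\widehat Q_{k-1})\alpha_{k-1}^\square$ rather than from the QR backward error, which the paper absorbs into harmless factors of $2$.
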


This theorem shows that we must avoid large peaks of the approximation and residual norms to suppress the increase of the residual gap. 
For Bl-CIRS, replacing $X_k$ and $R_k$ by $Y_k$ and $S_k$, respectively, in \eqref{eq:recur_form}, \Cref{add_th} holds for the residual gap $G_{S_k} := (B-AY_k) - S_k$. 
Therefore, as indicated right after \cite[Corollary~2.3]{AiharaImakuraMorikuni2022SIMAX}, even when using an inexact orthonormalization, Bl-CIRS smoothes the convergence of the residual and approximation norms and can reduce the residual gap. 
This theoretical result is consistent with the numerical results in \cref{sec3}. 

To prove \Cref{add_th}, we exploit rounding error models in \cite{Higham2002} and evaluate local error bounds for \eqref{eq:recur_form}.

\subsection{Error models for matrix computations}

Our analysis is based on the following models for matrix operations in finite precision arithmetic \cite{Higham2002}: 
\begin{align}
	& \mathrm{fl}(P+Q) =  P + Q + E',\quad \|E'\| \leq \textbf{u} \|P  + Q\|, \label{eq:fl_matrix_sum} \\
	& \mathrm{fl}(P \alpha^\square) = P \alpha^\square + E^\square,\quad \|E^\square\| \leq \gamma_s \|P\| \|\alpha^\square\|, \label{eq:fl_model3}\\
	& \mathrm{fl}(AP) = AP + E''', \quad \|E'''\| \leq \gamma_m \| A \| \| P \| \label{eq:fl_model4}
\end{align}
for given $P, Q \in \mathbb{F}^{n\times s}$ and $\alpha^\square \in \mathbb{F}^{s\times s}$, where $\gamma_k$ is defined by 
\begin{align*}
	\gamma_{k} := \frac{k \mathbf{u}}{1-k \mathbf{u}},
\end{align*}
and an assumption $k\mathbf{u} < 1$ is implicitly imposed whenever we use $\gamma_k$. 
For the notations $\mathbb{F}$, fl($\cdot $), $\textbf{u}$, and $m$, we refer the reader to the sentence right before \Cref{Th1}. 
In the following, we focus on using Givens rotation and Householder transformation to orthonormalize the columns of $P_{k-1} \in \mathbb{R}^{n\times s}$ and introduce their known rounding error bounds. 

The QR decomposition of $P_{k-1} \in \mathbb{R}^{n \times s}$ using Given rotations~\cite[section~19.6]{Higham2002} or Householder transformations~\cite[section~19.3]{Higham2002} numerically computes a Q-factor $\widehat{Q}_{k-1}^{\mathrm{full}} = \left[ \widehat{Q}_{k-1} \ \widehat{Q}_{k-1}' \right] \in \mathbb{R}^{n \times n}$ with $\widehat{Q}_{k-1} \in \mathbb{R}^{n \times s}$ such that
\begin{align}
\begin{split}
&\widehat{Q}_{k-1} = Q_{k-1}
\left(
\begin{bmatrix}
	I_s \\ 
	O 
\end{bmatrix}
+ \Delta I_{k-1} \right),\quad \Delta I_{k-1} \in \mathbb{R}^{n\times s},\\
&\| \Delta I_{k-1}(:, j) \|_{2} \leq 
\begin{cases}
	\tilde{\gamma}_{n+s-2} & \text{for Givens}, \\
	\tilde{\gamma}_{ns} & \text{for Householder},
\end{cases} 
\end{split}
\label{eq:Q}
\end{align}
for $j = 1, 2, \dots, s$, where $Q_{k-1} \in \mathbb{R}^{n \times n}$ is the product of Givens rotations or Householder transformations and is the Q-factor (an exactly orthogonal matrix) of the full QR decomposition of $P_{k-1}$ and $\|\cdot \|_2$ denotes the Euclidean norm. Here, the notation $X(:, j)$ follows the MATLAB convention, that is, it stands for the $j$th column of the matrix $X$. 
The scalar $\tilde \gamma_k$ is defined by 
\begin{align*}
\tilde{\gamma}_{k} = \frac{c k \mathbf{u}}{1 - c k \mathbf{u}}, 
\end{align*}
where $c$ is a small integer constant greater than one \cite[eq.~(3.8)]{Higham2002}, and $ck\mathbf{u} < 1$ is assumed similarly to $\gamma_k$. 
From \eqref{eq:Q}, it holds that 
\begin{align*}
\left\| \widehat{Q}_{k-1} \right\| \leq \| I_s \| + \left\| \Delta I_{k-1} \right\| \leq 
\begin{cases}
	\sqrt{s} (1+ \tilde{\gamma}_{n+s-2}) & \text{for Givens}, \\	
	\sqrt{s} (1+ \tilde{\gamma}_{ns}) & \text{for Householder}.	
\end{cases}
\end{align*}

In the analysis below, we will use the bounds
\begin{align}
	\tilde{\gamma}_j + \tilde{\gamma}_k + \tilde{\gamma}_j \tilde{\gamma}_k & < \tilde{\gamma}_{j+k} && \text{for} ~ c(j+k)\mathbf{u} < 1, \label{eq:bound_gamma_4} \\
	\gamma_k & < \tilde{\gamma}_k && \text{for} ~ ck\mathbf{u} < 1, \label{eq:bound_gamma_gamma1} \\
	\gamma_j \tilde{\gamma}_k & < \min (\gamma_j, \tilde{\gamma}_k) \leq \max (\gamma_j, \tilde{\gamma}_k) && \text{for} ~ \max(j,k) c\mathbf{u} < 1/2 \label{eq:bound_gamma_gamma2}
\end{align}
for positive integers $j$ and $k$, and a constant $c$; see also \cite[Lemma~3.3]{Higham2002}.

\subsection{Local errors in recursion formulas}

Here, we analyze local errors when using the recursion formulas \eqref{eq:recur_form}. 
The local errors $E_{X_k}$ and $E_{R_k}$ in the updated approximation and residual, respectively, can be expressed as follows: 
\begin{align}
	X_k & = \mathrm{fl}(X_{k-1} + \mathrm{fl}(\widehat{Q}_{k-1} \alpha_{k-1}^\square)) \notag \\
	& = X_{k-1} + \mathrm{fl}(\widehat{Q}_{k-1} \alpha_{k-1}^\square) + E_{X_k}' \label{eq:recur_E_Xk'} \\
	& = X_{k-1} + \widehat{Q}_{k-1} \alpha_{k-1}^\square + E_{X_k}' + E_{X_k}^\square \label{eq:recur_E_Xk_sq} \\
	& = X_{k-1} + \widehat{Q}_{k-1} \alpha_{k-1}^\square + E_{X_k}, \notag 
\end{align}
where $E_{X_k} := E_{X_k}' + E_{X_k}^\square$, and
\begin{align*}
	R_k 
	& = \mathrm{fl}(R_{k-1} - \mathrm{fl}(\mathrm{fl}(A \widehat{Q}_{k-1})\alpha_{k-1}^\square)) \\
	& = R_{k-1} - \mathrm{fl}(\mathrm{fl}(A \widehat{Q}_{k-1})\alpha_{k-1}^\square) - E_{R_k}' \\
	& = R_{k-1} - \mathrm{fl}(A \widehat{Q}_{k-1})\alpha_{k-1}^\square - E_{R_k}' - E_{R_k}^\square \\
	& = R_{k-1} - A \widehat{Q}_{k-1} \alpha_{k-1}^\square - E_{R_k}' - E_{R_k}^\square - E_{R_k}''' \alpha_{k-1}^\square \\
	& = R_{k-1} - A \widehat{Q}_{k-1} \alpha_{k-1}^\square - E_{R_k},
\end{align*}
where $E_{R_k} := E_{R_k}' + E_{R_k}^\square + E_{R_k}''' \alpha_{k-1}^\square$. 
Subsequently, the local errors satisfy the bounds below.

\begin{lemma} \label{lm:local_error}
For \eqref{eq:recur_form} in finite precision arithmetic, assume that $\widehat{Q}_{k-1}$ is a numerically computed Q-factor of the QR decomposition of $P_{k-1}$ using Givens rotations or Householder transformations, and that $c$, $n$, $s$, and $\mathbf{u}$ satisfy 
\begin{align}
\begin{cases}
c \left[ (n+2(s-1)) (\sqrt{s}+1) + 1 \right]\mathbf{u} < \frac{1}{2} \quad & \text{for Givens}, \\
c \left[(n+1) (\sqrt{s}+1) s + 1\right] \mathbf{u} < \frac{1}{2} & \text{for Householder}.	
\end{cases}\label{add_3}
\end{align}
Then, for $k \geq 1$, the local errors are bounded as follows: 
\begin{align}
	\| E_{X_k} \| & < 4 \sqrt{s} \gamma_s ( \| X_{k-1} \| +  \| X_k \| ) + \gamma_1 \| X_k \|, \label{eq:E_Xk}  \\
	\| E_{R_k} \| & < \gamma_1 \| R_k \| + 4 \sqrt{s} (\gamma_m + 2 \gamma_s) \| A \| \left( \| X_{k-1} \| + \| X_k \| \right). \label{eq:E_Rk}
\end{align}
\end{lemma}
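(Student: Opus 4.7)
The plan is to decompose each local error term as indicated in the text just before the lemma, apply the standard floating point models \eqref{eq:fl_matrix_sum}--\eqref{eq:fl_model4} term by term, and then connect the resulting bounds back to $\|X_k\|$, $\|X_{k-1}\|$, and $\|R_k\|$ via the recursion \eqref{eq:recur_form} together with the near-orthonormality of $\widehat{Q}_{k-1}$ from \eqref{eq:Q}. Specifically, writing $E_{X_k} = E_{X_k}' + E_{X_k}^\square$ and $E_{R_k} = E_{R_k}' + E_{R_k}^\square + E_{R_k}''' \alpha_{k-1}^\square$ as in \eqref{eq:recur_E_Xk'}--\eqref{eq:recur_E_Xk_sq}, I would first handle the outer-sum errors $E_{X_k}'$ and $E_{R_k}'$. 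From \eqref{eq:fl_matrix_sum} applied to each recursion, one gets $\|E_{X_k}'\| \leq \mathbf{u}\|X_k - E_{X_k}'\|$ and similarly $\|E_{R_k}'\| \leq \mathbf{u}\|R_k + E_{R_k}'\|$; rearranging these two inequalities yields $\|E_{X_k}'\| \leq \gamma_1 \|X_k\|$ and $\|E_{R_k}'\| \leq \gamma_1 \|R_k\|$ directly, contributing the clean $\gamma_1$-terms in both \eqref{eq:E_Xk} and \eqref{eq:E_Rk}.

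The heart of the argument is converting $\|\widehat{Q}_{k-1}\|\,\|\alpha_{k-1}^\square\|$, which appears in the bounds for $E_{X_k}^\square$, $E_{R_k}^\square$, and $E_{R_k}''' \alpha_{k-1}^\square$, into a quantity controlled by $\|X_k\| + \|X_{k-1}\|$. Using \eqref{eq:Q} and the fact that $Q_{k-1}$ is an exact orthogonal matrix, I would bound $\|\widehat{Q}_{k-1}\|$ by $\sqrt{s}(1+\tilde{\gamma})$ (with $\tilde\gamma = \tilde\gamma_{n+s-2}$ or $\tilde\gamma_{ns}$) and $\sigma_{\min}(\widehat{Q}_{k-1})$ from below by $1 - \sqrt{s}\tilde\gamma$, since $\widehat Q_{k-1}^\top \widehat Q_{k-1} = I_s + \Delta$ with $\|\Delta\| \leq 2\sqrt{s}\tilde\gamma + s\tilde\gamma^2$. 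The hypothesis \eqref{add_3} is precisely calibrated so that $\sqrt{s}\tilde\gamma \leq 1/2$ and the corresponding $(1+\tilde\gamma)(1+\gamma_1)$ factor is bounded by $2$, which is where the factor $4 = 2\cdot 2$ in \eqref{eq:E_Xk} and \eqref{eq:E_Rk} originates. From \eqref{eq:recur_E_Xk'} we have $\widehat Q_{k-1}\alpha_{k-1}^\square = X_k - X_{k-1} - E_{X_k}' - E_{X_k}^\square$, and so $\|\alpha_{k-1}^\square\| \leq (1-\sqrt{s}\tilde\gamma)^{-1}(\|X_k\| + \|X_{k-1}\| + \|E_{X_k}'\|)$ up to absorbing the $E_{X_k}^\square$ contribution into the left-hand side via a rearrangement.

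Combining these ingredients with \eqref{eq:fl_model3} gives $\|E_{X_k}^\square\| \leq \gamma_s \|\widehat Q_{k-1}\|\|\alpha_{k-1}^\square\| \leq 2\sqrt{s}\gamma_s(1+\tilde\gamma)(\|X_k\|+\|X_{k-1}\|+\gamma_1\|X_k\|)$, which telescopes via \eqref{eq:bound_gamma_4}--\eqref{eq:bound_gamma_gamma2} and \eqref{add_3} into the claimed $4\sqrt{s}\gamma_s(\|X_{k-1}\|+\|X_k\|)$ bound; summing with $\|E_{X_k}'\| \leq \gamma_1\|X_k\|$ delivers \eqref{eq:E_Xk}. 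For \eqref{eq:E_Rk}, the same $\|\widehat Q_{k-1}\|\|\alpha_{k-1}^\square\|$ bound is inserted into $\|E_{R_k}^\square\| \leq \gamma_s\|\mathrm{fl}(A\widehat Q_{k-1})\|\|\alpha_{k-1}^\square\| \leq \gamma_s(1+\gamma_m)\|A\|\|\widehat Q_{k-1}\|\|\alpha_{k-1}^\square\|$ and into $\|E_{R_k}''' \alpha_{k-1}^\square\| \leq \gamma_m\|A\|\|\widehat Q_{k-1}\|\|\alpha_{k-1}^\square\|$; combining the resulting coefficient $\gamma_m + \gamma_s(1+\gamma_m)$ and simplifying via $\gamma_s\gamma_m < \gamma_s$ from \eqref{eq:bound_gamma_gamma2} yields the prefactor $\gamma_m + 2\gamma_s$ in \eqref{eq:E_Rk}.

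The main technical obstacle is the backward-chaining step: the bound on $\|E_{X_k}^\square\|$ depends on $\|\alpha_{k-1}^\square\|$, which in turn depends on $\|E_{X_k}^\square\|$ itself via the recursion. This is where the explicit assumption \eqref{add_3} does the work, both guaranteeing $\sigma_{\min}(\widehat Q_{k-1}) \geq 1/2$ (so that $(I + \Delta)^{-1}$ is well-defined and bounded) and allowing the multiplicative $(1+\tilde\gamma)(1+\gamma_1)$ factors to be absorbed into the constant $2$. Once this bootstrap is resolved, the remaining manipulations are routine applications of \eqref{eq:bound_gamma_4}--\eqref{eq:bound_gamma_gamma2} to package the constants in the form stated in \eqref{eq:E_Xk}--\eqref{eq:E_Rk}.
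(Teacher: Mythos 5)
Your proposal is correct and follows essentially the same route as the paper's proof: the same decomposition $E_{X_k}=E_{X_k}'+E_{X_k}^\square$, $E_{R_k}=E_{R_k}'+E_{R_k}^\square+E_{R_k}'''\alpha_{k-1}^\square$, the same rearrangement trick for the $\gamma_1$-terms, and the same bootstrap bounding $\|\alpha_{k-1}^\square\|$ by $\|X_{k-1}\|+\|X_k\|$ through the near-orthonormality of $\widehat{Q}_{k-1}$ under assumption \eqref{add_3}. The only cosmetic difference is that you phrase the key lower bound via $\sigma_{\min}(\widehat{Q}_{k-1})\geq 1-\sqrt{s}\tilde{\gamma}$, whereas the paper writes $\|\alpha_{k-1}^\square\|=\|Q_{k-1}(:,1\!:\!s)\,\alpha_{k-1}^\square\|\leq\|\widehat{Q}_{k-1}\alpha_{k-1}^\square\|+\|\Delta I_{k-1}\alpha_{k-1}^\square\|$; these are the same estimate.
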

\begin{proof}
The assumptions imply that $c(n+s-2)\mathbf{u}<1$ and $c n s \mathbf{u} < 1$ hold and ensure the existence of $\widehat{Q}_{k-1}$ such that \eqref{eq:Q} is satisfied.

From \eqref{eq:fl_matrix_sum}, it follows that 
\begin{align*}
\| E_{X_k}' \| \leq \mathbf{u} \| X_{k-1} + \mathrm{fl}(\widehat{Q}_{k-1} \alpha_{k-1}^\square) \| \leq \mathbf{u} (\| X_k \| + \| E_{X_k}' \|),
\end{align*}
in which the last inequality is from \eqref{eq:recur_E_Xk'}, that is, $X_{k-1} + \mathrm{fl}(\widehat{Q}_{k-1} \alpha_{k-1}^\square) = X_k - E_{X_k}'$.
Subsequently, we obtain the bound
\begin{align}
	\| E_{X_k}' \| \leq \gamma_1 \| X_k \|. \label{eq:E_Xk'} 
\end{align}
Next, it follows from \eqref{eq:fl_model3} that 
\begin{align}
	\| E_{X_k}^\square \|
	\leq \gamma_s \| \widehat{Q}_{k-1} \| \| \alpha_{k-1}^\square \| 
	\leq 
	\begin{cases}
		\sqrt{s} \left(1 + \tilde{\gamma}_{n+s-2} \right) \gamma_s \| \alpha_{k-1}^\square \| & \text{for Givens}, \\
		\sqrt{s} \left(1 + \tilde{\gamma}_{ns} \right) \gamma_s \| \alpha_{k-1}^\square \| & \text{for Householder}. 
	\end{cases}	\label{new_add_1}
\end{align}
Combining these bounds with \eqref{eq:recur_E_Xk_sq} and \eqref{eq:E_Xk'} gives
\begin{align*}
	\| \widehat{Q}_{k-1} \alpha_{k-1}^\square \| 
	& \leq \| X_{k-1} \| + \| X_k \| + \| E_{X_k}' \| + \| E_{X_k}^\square \| \\
	& \leq 
	\begin{cases}
		\| X_{k-1} \| + (1+\gamma_1) \| X_k \| + \sqrt{s} \left( 1 + \tilde{\gamma}_{n+s-2} \right) \gamma_s \| \alpha_{k-1}^\square \| & \text{for Givens}, \\
		\| X_{k-1} \| + (1+\gamma_1) \| X_k \| + \sqrt{s} \left( 1 + \tilde{\gamma}_{ns} \right) \gamma_s \| \alpha_{k-1}^\square \| & \text{for Householder}. 
	\end{cases}
\end{align*}
Regarding $\alpha_{k-1}^\square$, we have 
\begin{align*}
	\| \alpha_{k-1}^\square \|
	& = \| Q_{k-1}(:,1\!:\!s)\, \alpha_{k-1}^\square \| \\
	& = \left\| \left( \widehat{Q}_{k-1} - Q_{k-1} 
	\Delta I_{k-1} \right) \alpha_{k-1}^\square \right\| \\
	& \leq \| \widehat{Q}_{k-1} \alpha_{k-1}^\square \| + \left\| 
	\Delta I_{k-1} \alpha_{k-1}^\square \right\| \\ 
	& \leq 
	\begin{cases}
		\| \widehat{Q}_{k-1} \alpha_{k-1}^\square \| + \sqrt{s} \tilde{\gamma}_{n+s-2} \| \alpha_{k-1}^\square \| & \text{for Givens}, \\
		\| \widehat{Q}_{k-1} \alpha_{k-1}^\square \| + \sqrt{s} \tilde{\gamma}_{ns} \| \alpha_{k-1}^\square \| & \text{for Householder}.
	\end{cases}
\end{align*}
Thus, we obtain the bounds 
\begin{align*}
	\| \alpha_{k-1}^\square \| 
	& \leq 
	\begin{cases}
		\| X_{k-1} \| + (1+\gamma_1) \| X_k \| + \sqrt{s} ( (1 + \tilde{\gamma}_{n+s-2}) \gamma_s + \tilde{\gamma}_{n+s-2}) \| \alpha_{k-1}^\square \|, \\
		\| X_{k-1} \| + (1+\gamma_1) \| X_k \| + \sqrt{s} ( (1 + \tilde{\gamma}_{ns}) \gamma_s + \tilde{\gamma}_{ns}) \| \alpha_{k-1}^\square \|
	\end{cases}	\\
	& < 
	\begin{cases}
		\| X_{k-1} \| + (1+\gamma_1) \| X_k \| +  \sqrt{s} \tilde{\gamma}_{n+2(s-1)} \| \alpha_{k-1}^\square \| & \text{for Givens}, \\
		\| X_{k-1} \| + (1+\gamma_1) \| X_k \| +  \sqrt{s} \tilde{\gamma}_{(n+1)s} \| \alpha_{k-1}^\square \| & \text{for Householder}. 
	\end{cases}
\end{align*}
Here, the second inequality is from 
\begin{align*}
	(1+\tilde{\gamma}_{n+s-2}) \gamma_s + \tilde{\gamma}_{n+s-2} 
	& < \tilde{\gamma}_{s} + \tilde{\gamma}_{n+s-2} + \tilde{\gamma}_s \tilde{\gamma}_{n+s-2} \quad (\because \eqref{eq:bound_gamma_gamma1}) \\
	& < \tilde{\gamma}_{n+2(s-1)} \quad (\because \eqref{eq:bound_gamma_4})
\end{align*}
and
\begin{align*}
	(1+\tilde{\gamma}_{ns}) \gamma_s + \tilde{\gamma}_{ns} 
	& < \tilde{\gamma}_{s} + \tilde{\gamma}_{ns} + \tilde{\gamma}_s \tilde{\gamma}_{ns} \quad (\because \eqref{eq:bound_gamma_gamma1}) \\
	& < \tilde{\gamma}_{(n+1)s} \quad (\because \eqref{eq:bound_gamma_4}) 
\end{align*}
hold for Givens and Householder, respectively. 
Rearranging for $\alpha_{k-1}^\square$, we have
\begin{align*}
	\| \alpha_{k-1}^\square \| < 
	\begin{cases}
		\dfrac{1}{1-\sqrt{s}\tilde{\gamma}_{n+2(s-1)}} \left[ \| X_{k-1} \| + (1+\gamma_1) \| X_k \| \right] & \text{for Givens}, \\
		\dfrac{1}{1-\sqrt{s}\tilde{\gamma}_{(n+1)s}} \left[ \| X_{k-1} \| + (1+\gamma_1) \| X_k \| \right] & \text{for Householder}.
	\end{cases}
\end{align*}
Here, $1-\sqrt{s}\tilde{\gamma}_{n+2(s-1)} > 0$ and $1 - \sqrt{s} \tilde{\gamma}_{(n+1)s} > 0$ are equivalent to 
\begin{align*}
c (n+2(s-1)) (\sqrt{s}+1) \mathbf{u} < 1\quad \text{and}\quad c (n+1) s (\sqrt{s}+1) \mathbf{u} < 1, 
\end{align*}
respectively, and these are satisfied by the assumptions \eqref{add_3}. 
Because the coefficients satisfy 
\begin{align*}
\frac{1}{1-\sqrt{s}\tilde{\gamma}_{n+2(s-1)}} = 1 + \frac{c (n+2(s-1)) \sqrt{s} \mathbf{u}}{1-c(n+2(s-1)) (\sqrt{s}+1) \mathbf{u}} < 1 + \tilde{\gamma}_{(n+2(s-1))(\sqrt{s}+1)}
\end{align*}
and
\begin{align*}
\frac{1}{1-\sqrt{s}\tilde{\gamma}_{(n+1)s}} = 1 + \frac{c (n+1) s^{3/2} \mathbf{u}}{1-c(n+1)(\sqrt{s}+1)s \mathbf{u}} < 1 + \tilde{\gamma}_{(n+1)(\sqrt{s}+1)s},
\end{align*}
we obtain the bounds 
\begin{align*}
	\| \alpha_{k-1}^\square \|
	& < 
	\begin{cases}
		(1 + \tilde{\gamma}_{(n+2(s-1))(\sqrt{s}+1)}) \left[ \| X_{k-1} \| + (1+\gamma_1) \| X_k \| \right] & \text{for Givens}, \\
		(1 + \tilde{\gamma}_{(n+1)(\sqrt{s}+1)s}) \left[ \| X_{k-1} \| + (1+\gamma_1) \| X_k \| \right] & \text{for Householder}.
	\end{cases}
\end{align*}
Noting that 
\begin{align*}
	(1 + \tilde{\gamma}_{(n+2(s-1)) (\sqrt{s}+1)}) (1+\gamma_1) 
	&< 1 + \tilde{\gamma}_1 + \tilde{\gamma}_{(n+2(s-1)) (\sqrt{s}+1)} + \tilde{\gamma}_1 \tilde{\gamma}_{(n+2(s-1)) (\sqrt{s}+1)} \quad (\because \eqref{eq:bound_gamma_gamma1}) \\
	&< 1 + \tilde{\gamma}_{(n+2(s-1)) (\sqrt{s}+1)+1} \quad (\because \eqref{eq:bound_gamma_4})
\end{align*}
and
\begin{align*}
	(1 + \tilde{\gamma}_{(n+1) (\sqrt{s}+1) s}) (1+\gamma_1) 
	& < 1 + \tilde{\gamma}_1 + \tilde{\gamma}_{(n+1)(\sqrt{s}+1)s} + \tilde{\gamma}_1 \tilde{\gamma}_{(n+1)(\sqrt{s}+1)s} \quad (\because \eqref{eq:bound_gamma_gamma1}) \\
	& < 1 + \tilde{\gamma}_{(n+1)(\sqrt{s}+1)s+1}, \quad (\because \eqref{eq:bound_gamma_4})
\end{align*}
we obtain the final evaluations for $\alpha_{k-1}^{\square}$ as follows: 
\begin{align}
	\| \alpha_{k-1}^\square \| <
	\begin{cases}
		(1+\tilde{\gamma}_{(n+2(s-1)) (\sqrt{s}+1)}) \| X_{k-1} \| \\ 
		\quad +  (1 + \tilde{\gamma}_{(n+2(s-1)) (\sqrt{s}+1)+1}) \| X_k \| & \text{for Givens}, \\
		(1+\tilde{\gamma}_{(n+1) (\sqrt{s}+1) s}) \| X_{k-1} \| \\
		\quad +  (1 + \tilde{\gamma}_{(n+1) (\sqrt{s}+1) s + 1}) \| X_k \| & \text{for Householder}. 
	\end{cases} \label{eq:bound_alpha}
\end{align}
Now, to evaluate $\| E_{X_k}^\square \|$ simply, we use the bounds
\begin{align*}
	&\gamma_s(1+\tilde{\gamma}_{(n+2(s-1)) (\sqrt{s}+1)+1}) = \gamma_s + \gamma_s\tilde{\gamma}_{(n+2(s-1)) (\sqrt{s}+1)+1} < \gamma_s + \gamma_s = 2 \gamma_s,\\ 
	&\gamma_s(1+\tilde{\gamma}_{(n+1)(\sqrt{s}+1)s+1}) = \gamma_s + \gamma_s\tilde{\gamma}_{(n+1)(\sqrt{s}+1)s+1} < \gamma_s + \gamma_s = 2 \gamma_s,
\end{align*}
where the inequalities are from \eqref{eq:bound_gamma_gamma2}, and the corresponding assumptions \eqref{add_3} are imposed for Givens and Householder, respectively. 
Following a similar argument to the above bounds, substituting \eqref{eq:bound_alpha} into the bounds \eqref{new_add_1} of $\| E_{X_k}^\square \|$ gives
\begin{align}
\begin{split}
	\| E_{X_k}^\square \| 
	& < 
	\begin{cases}
	2 \sqrt{s} \left(1 + \tilde{\gamma}_{n+s-2} \right) \gamma_s (\| X_{k-1} \| + \| X_k \| ) & \text{for Givens}, \\
	2 \sqrt{s} \left(1 + \tilde{\gamma}_{ns} \right) \gamma_s (\| X_{k-1} \| + \| X_k \|) & \text{for Householder} \\
	\end{cases}	\\
	& < 4 \sqrt{s} \gamma_s ( \| X_{k-1} \| +  \| X_k \| ). \label{eq:E_Xk_sq}
\end{split}
\end{align}
Summing up \eqref{eq:E_Xk'} and \eqref{eq:E_Xk_sq}, we obtain the bound~\eqref{eq:E_Xk}.

Next, it holds that $\| E_{R_k}' \| \leq \gamma_1 \| R_k \|$ as in \eqref{eq:E_Xk'}. 
From \eqref{eq:fl_model3}, \eqref{eq:fl_model4}, \eqref{eq:bound_gamma_gamma2}, and \eqref{eq:bound_alpha}, similarly to \eqref{eq:E_Xk_sq}, we obtain the following bound: 
\begin{align*}
	\| E_{R_k}^\square \| 
	& \leq \gamma_s \| \mathrm{fl}(A \widehat{Q}_{k-1}) \| \| \alpha_{k-1}^\square \| \\
	& < 
	\begin{cases}
		\begin{aligned}
			& \gamma_s (\| A \widehat{Q}_{k-1} \| + \gamma_m \| A \| \| \widehat{Q}_{k-1} \|) [ (1 + \tilde{\gamma}_{(n+2(s-1))(\sqrt{s}+1)}) \| X_{k-1} \| \\
			& \quad + (1 + \tilde{\gamma}_{(n+2(s-1))(\sqrt{s}+1)+1}) \| X_k \| ]
		\end{aligned}
		& \text{for Givens}, \\
		\begin{aligned}
			& \gamma_s (\| A \widehat{Q}_{k-1} \| + \gamma_m \| A \| \| \widehat{Q}_{k-1} \|) [ (1 + \tilde{\gamma}_{(n+1)(\sqrt{s}+1)s}) \| X_{k-1} \| \\
			& \quad + (1 + \tilde{\gamma}_{(n+1)(\sqrt{s}+1)s+1}) \| X_k \| ] 
		\end{aligned} 
		& \text{for Householder}
	\end{cases}	\\
	& < 2 \gamma_s (1+\gamma_m) \| A \| \| \widehat{Q}_{k-1} \| ( \| X_{k-1} \| + \| X_k \| ) \\ 
	& < 
	\begin{cases}
		4 \sqrt{s} \gamma_s (1+\tilde{\gamma}_{n+s-2}) \| A \| ( \| X_{k-1} \| + \| X_k \| )
		& \text{for Givens}, \\
		4 \sqrt{s} \gamma_s (1+\tilde{\gamma}_{ns}) \| A \| ( \| X_{k-1} \| + \| X_k \| )
		& \text{for Householder}
	\end{cases} \\
	& < 8 \sqrt{s} \gamma_s \| A \| ( \| X_{k-1} \| + \| X_k \| ). 
\end{align*}
From \eqref{eq:fl_model4}, in a similar fashion to the bounds above, we have
\begin{align*}
	\| E_{R_k}''' \alpha_{k-1}^\square \| 
	& \leq \gamma_m \| A \| \| \widehat{Q}_{k-1} \| \|\alpha_{k-1}^\square\| \\
	& < 2 \gamma_m \| A \| \| \widehat{Q}_{k-1} \| ( \| X_{k-1} \| + \| X_k \| ) \\
	& < 4 \sqrt{s} \gamma_m \| A \| ( \| X_{k-1} \| + \| X_k \| ). 
\end{align*}
Hence, summing up the bounds for $\|E'_{R_k}\|$, $\| E_{R_k}^\square \|$, and $\| E_{R_k}''' \alpha_{k-1}^\square\|$ gives \eqref{eq:E_Rk}.
\end{proof}

\subsection{Bound for the residual gap}\label{sec4.3}

We provide a proof of \Cref{add_th} under the assumptions in \Cref{lm:local_error}. 

\begin{proof}[Proof of \Cref{add_th}]
From \Cref{lm:local_error}, we have that 
\begin{align*}
\| G_{R_k} \| 
& = \| [B - A (X_{k-1} + \widehat{Q}_{k-1} \alpha_{k-1}^\square + E_{X_k})] - (R_{k-1} - A \widehat{Q}_{k-1} \alpha_{k-1}^\square - E_{R_k}) \| \\
& \leq \| (B - A X_{k-1}) - R_{k-1} \| + \| A \| \| E_{X_k} \| + \| E_{R_k} \| \\
& \leq \| A \| \sum_{i=1}^k \| E_{X_i} \| + \sum_{i=1}^k \| E_{R_{i}} \| \\
& < 4 \sqrt{s} \gamma_s \| A \| \sum_{i=1}^k (\| X_{i-1} \| + \| X_i \|) + \gamma_1 \| A \| \sum_{i=1}^k \| X_i \| \\
& \qquad + \gamma_1 \sum_{i=1}^k \| R_i \| + 4 \sqrt{s} (\gamma_m + 2 \gamma_s) \| A \| \sum_{i=1}^k \left( \| X_{i-1} \| + \| X_i \| \right) \\
& < \left[8 \sqrt{s} (\gamma_m + 3 \gamma_s) + \gamma_1\right] \| A \| \sum_{i=1}^k \| X_i \| + \gamma_1 \sum_{i=1}^k \| R_i \| \\
& < ( 8 \sqrt{s} \gamma_{m+3s} + \gamma_1 ) k \| A \| \max_{0<i\leq k} \| X_i \| + k \gamma_1 \max_{0<i\leq k} \| R_i \|.
\end{align*}
\end{proof}

\section{Concluding remarks}\label{sec5}

In this study, we have proposed a block version of the cross-interactive residual smoothing (\Cref{alg2} referred to as Bl-CIRS) incorporated with Lanczos-type solvers for linear systems with multiple right-hand sides. 
The proposed smoothing scheme can obtain a monotonically decreasing sequence of the residual norms along with a smoothly increasing sequence of the associated approximation norms, thereby reducing the residual gap and improving the attainable accuracy of the approximations. 
The key point of the block version is to define the smoothing parameter as an $s$-by-$s$ matrix and orthonormalize the columns of $n$-by-$s$ auxiliary matrices used to update the approximations and residuals. 
We have demonstrated the performance of the proposed scheme through numerical experiments in \cref{sec3}. 
The rounding error analysis in \Cref{add_th} shows that Bl-CIRS is useful in avoiding a large residual gap, even when allowing inexactness of the orthonormalization, which is weaker than our previous assumption made in \cite[Theorem~2.2]{AiharaImakuraMorikuni2022SIMAX}. 

This study presented culminating results on CIRS for reducing the residual gap. 
Nevertheless, we will continue the quest for further advances from both theoretical and experimental viewpoints in future studies. 
In the future, we could consider preconditioned and/or complex arithmetic cases and reduce the computational costs by a partial application of the orthonormalization for Bl-CIRS (cf.~\cite[section~7.5]{AiharaImakuraMorikuni2022SIMAX}).

\appendix
\section{Towards general case of orthonormalizations}\label{Appendix} 

Beyond using a particular process for the orthonormalization strategy as discussed in \cref{sec4}, we consider the residual gap in a more general case. 
The quantity~$\| \Delta I_{k-1} \|$ below is regarded as the relative distance from a column-orthonormal matrix $Q_{k-1}$ closest to $\widehat{Q}_{k-1}$.

\begin{lemma} \label{lm:local_error_}
	Let $X_k \in \mathbb{F}^{n \times s}$ and $R_k \in \mathbb{F}^{n \times s}$ be the $k$th approximation and residual, respectively, generated by \eqref{eq:recur_form} in finite precision arithmetic.
	Let $\widehat{Q}_{k-1} \in \mathbb{R}^{n \times s}$ be a numerically computed Q-factor of the QR decomposition of $P_{k-1}$ such that $\widehat{Q}_{k-1} = Q_{k-1} (I_s + \Delta I_{k-1})$ for some $Q_{k-1} \in \mathbb{R}^{n \times s}$ whose columns are exactly orthonormal.
	Assume that $m$, $s$, and $\mathbf{u}$ satisfy 
	\begin{align}
	\| \Delta I_{k-1} \| + \gamma_s\| \widehat{Q}_{k-1} \| < 1 \quad \text{and} \quad (m+s) \mathbf{u} < 1. \label{add_asp}
	\end{align}
	Then, the local errors are bounded as 
	\begin{align}
		\| E_{X_k} \| & \leq \frac{\gamma_s \| \widehat{Q}_{k-1} \|}{1 - \gamma_s \| \widehat{Q}_{k-1} \| - \| \Delta I_{k-1} \|} [ \| X_{k-1} \| + (1+\gamma_1) \| X_k \| ] + \gamma_1 \| X_k \|, \label{eq:E_Xk_}  \\
		\| E_{R_k} \| & \leq \gamma_1 \| R_k \| + \frac{\gamma_{m+s} \| A \|\| \widehat{Q}_{k-1} \|  }{1 - \gamma_s \| \widehat{Q}_{k-1} \| - \| \Delta I_{k-1} \|} [ \| X_{k-1} \| + (1+\gamma_1) \| X_k \| ]. \label{eq:E_Rk_}
	\end{align}	
\end{lemma}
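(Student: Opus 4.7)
The plan is to mimic the proof of \Cref{lm:local_error}, but to retain $\|\widehat{Q}_{k-1}\|$ and $\|\Delta I_{k-1}\|$ as generic quantities instead of substituting the Givens/Householder-specific bounds used there. I would split each local error into the same three pieces as before, $E_{X_k}=E_{X_k}'+E_{X_k}^{\square}$ and $E_{R_k}=E_{R_k}'+E_{R_k}^{\square}+E_{R_k}'''\alpha_{k-1}^{\square}$, arising respectively from the outer floating-point sum, the product $\widehat{Q}_{k-1}\alpha_{k-1}^{\square}$, and the multiplication by $A$. Applying \eqref{eq:fl_matrix_sum} immediately yields $\|E_{X_k}'\|\leq\gamma_1\|X_k\|$ and $\|E_{R_k}'\|\leq\gamma_1\|R_k\|$ exactly as in the specialized proof.

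The only genuinely new ingredient is a self-referential estimate for $\|\alpha_{k-1}^\square\|$. Using \eqref{eq:fl_model3} together with the recursion \eqref{eq:recur_form} gives
\begin{align*}
\|\widehat{Q}_{k-1}\alpha_{k-1}^\square\|\leq \|X_{k-1}\|+(1+\gamma_1)\|X_k\|+\gamma_s\|\widehat{Q}_{k-1}\|\|\alpha_{k-1}^\square\|.
\end{align*}
Because the columns of $Q_{k-1}$ are exactly orthonormal we have $\|\alpha_{k-1}^\square\|=\|Q_{k-1}\alpha_{k-1}^\square\|$, and the factorization $\widehat{Q}_{k-1}=Q_{k-1}(I_s+\Delta I_{k-1})$ yields $\|\alpha_{k-1}^\square\|\leq\|\widehat{Q}_{k-1}\alpha_{k-1}^\square\|+\|\Delta I_{k-1}\|\|\alpha_{k-1}^\square\|$. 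Combining these two inequalities and using the first half of assumption \eqref{add_asp} to guarantee a positive denominator, I solve for $\|\alpha_{k-1}^\square\|$ to obtain
\begin{align*}
\|\alpha_{k-1}^\square\|\leq\frac{\|X_{k-1}\|+(1+\gamma_1)\|X_k\|}{1-\gamma_s\|\widehat{Q}_{k-1}\|-\|\Delta I_{k-1}\|}.
\end{align*}
Substituting this into $\|E_{X_k}^{\square}\|\leq\gamma_s\|\widehat{Q}_{k-1}\|\|\alpha_{k-1}^\square\|$ from \eqref{eq:fl_model3} and adding $\|E_{X_k}'\|$ produces \eqref{eq:E_Xk_}.

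For the residual local error, applying \eqref{eq:fl_model3} and \eqref{eq:fl_model4} in sequence and bounding $\|\mathrm{fl}(A\widehat{Q}_{k-1})\|\leq(1+\gamma_m)\|A\|\|\widehat{Q}_{k-1}\|$ gives
\begin{align*}
\|E_{R_k}^{\square}\|+\|E_{R_k}'''\alpha_{k-1}^\square\|\leq\bigl[\gamma_s(1+\gamma_m)+\gamma_m\bigr]\|A\|\|\widehat{Q}_{k-1}\|\|\alpha_{k-1}^\square\|.
\end{align*}
The second half of \eqref{add_asp}, $(m+s)\mathbf{u}<1$, permits the standard collapse $\gamma_s+\gamma_m+\gamma_s\gamma_m\leq\gamma_{m+s}$, so the bracket reduces to $\gamma_{m+s}$. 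Plugging in the bound on $\|\alpha_{k-1}^\square\|$ and adding $\|E_{R_k}'\|$ yields \eqref{eq:E_Rk_}.

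The main obstacle is the circular nature of the estimate for $\|\alpha_{k-1}^\square\|$: since $\|E_{X_k}^{\square}\|$ depends linearly on $\|\alpha_{k-1}^\square\|$ and $\|\alpha_{k-1}^\square\|$ in turn depends on $\|E_{X_k}^{\square}\|$ through the recursion, a direct term-by-term bound is not available. The way out is to exploit the exact orthonormality of $Q_{k-1}$ to relate $\|\alpha_{k-1}^\square\|$ to $\|\widehat{Q}_{k-1}\alpha_{k-1}^\square\|$ up to a correction governed by $\|\Delta I_{k-1}\|$, and then fold everything into a single linear inequality that is solvable precisely under the condition $\gamma_s\|\widehat{Q}_{k-1}\|+\|\Delta I_{k-1}\|<1$ imposed in \eqref{add_asp}.
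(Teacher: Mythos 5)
Your proposal is correct and follows essentially the same route as the paper's proof: the same three-way decomposition of the local errors, the same self-referential bound on $\|\alpha_{k-1}^\square\|$ resolved via the exact orthonormality of $Q_{k-1}$ and the condition $\gamma_s\|\widehat{Q}_{k-1}\|+\|\Delta I_{k-1}\|<1$, and the same collapse $\gamma_s(1+\gamma_m)+\gamma_m\leq\gamma_{m+s}$ for the residual part. No gaps.
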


\begin{proof}
	As in \eqref{eq:E_Xk'}, we have 
	\begin{align}
		\| E_{X_k}' \| \leq \gamma_1 \| X_k \|, \label{eq:E_Xk'_} 
	\end{align}
	and it follows from \eqref{eq:fl_model3} that $\| E_{X_k}^\square \| \leq \gamma_s \| \widehat{Q}_{k-1} \| \| \alpha_{k-1}^\square \|$.
	Then, from \eqref{eq:recur_E_Xk_sq}, we have
	\begin{align*}
		\| \widehat{Q}_{k-1} \alpha_{k-1}^\square \| 
		& \leq \| X_{k-1} \| + \| X_k \| + \| E_{X_k}' \| + \| E_{X_k}^\square \| \\
		& \leq \| X_{k-1} \| + (1+\gamma_1) \| X_k \| + \gamma_s \| \widehat{Q}_{k-1} \| \| \alpha_{k-1}^\square \|.
	\end{align*}
	Plugging this into
	\begin{align*}
		\| \alpha_{k-1}^\square \| & = \| Q_{k-1} \alpha_{k-1}^\square \| = \| (\widehat{Q}_{k-1} - Q_{k-1} \Delta I_{k-1}) \alpha_{k-1}^\square \| \\
		& \leq \| \widehat{Q}_{k-1} \alpha_{k-1}^\square \| + \| \Delta I_{k-1} \| \| \alpha_{k-1}^\square \|, 
	\end{align*}
	it holds that
	\begin{align*}
		\| \alpha_{k-1}^\square \| 
		& \leq \| X_{k-1} \| + (1+\gamma_1) \| X_k \| + (\gamma_s \| \widehat{Q}_{k-1} \| + \| \Delta I_{k-1} \|) \| \alpha_{k-1}^\square \|.
	\end{align*}
	We therefore obtain the bound for $\alpha_{k-1}^\square$ as
	\begin{align}
		\| \alpha_{k-1}^\square \| \leq \frac{1}{1 - \gamma_s \| \widehat{Q}_{k-1} \| - \| \Delta I_{k-1} \|} \left[ \| X_{k-1} \| + (1+\gamma_1) \| X_k \| \right], \label{add_alpha}
	\end{align}
	where the denominator is positive from the first assumption in \eqref{add_asp}. 
	Hence, we have
	\begin{align}
		\| E_{X_k}^\square \| 
		& \leq \frac{\gamma_s \| \widehat{Q}_{k-1} \|}{1 - \gamma_s \| \widehat{Q}_{k-1} \| - \| \Delta I_{k-1} \|} \left[ \| X_{k-1} \| + (1+\gamma_1) \| X_k \| \right]. \label{eq:E_Xk_sq_}
	\end{align}
	Summing up \eqref{eq:E_Xk'_} and \eqref{eq:E_Xk_sq_}, we obtain the bound~\eqref{eq:E_Xk_}.
	
	Next, from $\| E_{R_k}' \| \leq \gamma_1 \| R_k \|$, \eqref{eq:fl_model3}, \eqref{eq:fl_model4}, and \eqref{add_alpha}, we have
	\begin{align*}
		\| E_{R_k}^\square \| 
		& \leq \gamma_s \| \mathrm{fl}(A \widehat{Q}_{k-1}) \| \| \alpha_{k-1}^\square \| \\
		& \leq \frac{\gamma_s}{1 - \gamma_s \| \widehat{Q}_{k-1} \| - \| \Delta I_{k-1} \|} (\| A \widehat{Q}_{k-1} \| + \gamma_m \| A \| \| \widehat{Q}_{k-1} \|) \left[ \| X_{k-1} \| + (1+\gamma_1) \| X_k \| \right] \\
		& \leq \frac{\gamma_s (1+\gamma_m) \| A \| \| \widehat{Q}_{k-1} \|}{1 - \gamma_s \| \widehat{Q}_{k-1} \| - \| \Delta I_{k-1} \|} \left[ \| X_{k-1} \| + (1+\gamma_1) \| X_k \| \right].
	\end{align*}
	From \eqref{eq:fl_model4}, $\| E_{R_k}'''\alpha_{k-1}^\square \| \leq \gamma_m \| A \| \| \widehat{Q}_{k-1} \| \|\alpha_{k-1}^\square \|$ also holds. 
	Hence, we have
	\begin{align*}
		\| E_{R_k} \| 
		& \leq \gamma_1 \| R_k \| + \frac{\gamma_s (1 + \gamma_m) \| A \| \| \widehat{Q}_{k-1} \|}{1 - \gamma_s \| \widehat{Q}_{k-1} \| - \| \Delta I_{k-1} \|} [ \| X_{k-1} \| + (1+\gamma_1) \| X_k \| ] \\
		& \qquad + \frac{\gamma_m \| A \| \| \widehat{Q}_{k-1} \|}{1 - \gamma_s \| \widehat{Q}_{k-1} \| - \| \Delta I_{k-1} \|} [ \| X_{k-1} \| + (1+\gamma_1) \| X_k \| ] \\
		& \leq \gamma_1 \| R_k \| + \frac{\gamma_{m+s} \| A \| \| \widehat{Q}_{k-1} \|}{1 - \gamma_s \| \widehat{Q}_{k-1} \| - \| \Delta I_{k-1} \|} [ \| X_{k-1} \| + (1+\gamma_1) \| X_k \| ].
	\end{align*}
	Here, the inequality $\gamma_s(1+\gamma_m) + \gamma_m \leq \gamma_{m+s}$ and the second assumption in \eqref{add_asp} are used. 
\end{proof}

Finally, we evaluate the residual gap by using \Cref{lm:local_error_}.
The following result generalizes those for Givens and Householder orthonormalizations in \cref{sec4} by retaining $\|\widehat{Q}_{k-1}\|$, and thus other orthonormalization results such as from the Gram--Schmidt process are expected to be incorporated. 

\begin{theorem}
	Under the assumptions in \Cref{lm:local_error_} with $X_0 = O$ and 
	\begin{align*}
	\| \Delta I_i \| + \gamma_s\| \widehat{Q}_i \| < 1 \quad \text{for} \quad i<k, \quad \text{and} \quad (m+2s) \mathbf{u} < 1,
	\end{align*}
	the norm of the residual gap~$G_{R_k} := (B-AX_k) - R_k$ is bounded as follows: 
	\begin{align}
\| G_{R_k} \| < \sum_{i=1}^k \frac{2 \gamma_{m+2s} \| A \| \| \widehat{Q}_{i-1} \|}{1 - \gamma_s \| \widehat{Q}_{i-1} \| - \| \Delta I_{i-1} \|} \left( \| X_{i-1} \| + \| X_i \| \right) + \gamma_1 \| A \| \sum_{i=1}^k \| X_i \| + \gamma_1 \sum_{i=1}^k \| R_i \|. \label{eq:G_Rk_}
	\end{align}
\end{theorem}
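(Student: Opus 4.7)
The plan is to mimic the proof of Theorem 4.1 in Section 4.3, but substitute the more general local-error bounds from Lemma A.1 instead of the Givens/Householder-specific ones from Lemma 4.2. First I would derive the telescoping identity for the residual gap. From the recursion formulas \eqref{eq:recur_form} written in finite-precision form, one has
\begin{align*}
B - A X_i &= B - A X_{i-1} - A \widehat{Q}_{i-1} \alpha_{i-1}^\square - A E_{X_i}, \\
R_i &= R_{i-1} - A \widehat{Q}_{i-1} \alpha_{i-1}^\square - E_{R_i}.
\end{align*}
Subtracting yields $G_{R_i} = G_{R_{i-1}} - A E_{X_i} + E_{R_i}$. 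Since $X_0 = O$ implies $G_{R_0} = 0$, iterating and taking norms gives the clean telescoping bound
\begin{align*}
\|G_{R_k}\| \leq \|A\| \sum_{i=1}^k \|E_{X_i}\| + \sum_{i=1}^k \|E_{R_i}\|,
\end{align*}
which is exactly the starting point used in the proof of Theorem 4.1.

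Next I would substitute the bounds \eqref{eq:E_Xk_} and \eqref{eq:E_Rk_} from Lemma A.1 into the telescoping inequality. Multiplying \eqref{eq:E_Xk_} by $\|A\|$ and adding \eqref{eq:E_Rk_} termwise, the two fractional contributions share the common prefactor $\|A\|\|\widehat{Q}_{i-1}\|/(1 - \gamma_s\|\widehat{Q}_{i-1}\| - \|\Delta I_{i-1}\|)$ with coefficients $\gamma_s$ and $\gamma_{m+s}$, respectively, while the remainders produce $\gamma_1\|A\|\|X_i\| + \gamma_1\|R_i\|$. Using the standard inequality $\gamma_s + \gamma_{m+s} \leq \gamma_{m+2s}$ (this is where the second assumption $(m+2s)\mathbf{u}<1$ is needed so $\gamma_{m+2s}$ is well defined; compare the analogous step at the end of the proof of Lemma A.1) consolidates the fractional part into a single term
\begin{align*}
\frac{\gamma_{m+2s}\|A\|\|\widehat{Q}_{i-1}\|}{1 - \gamma_s\|\widehat{Q}_{i-1}\| - \|\Delta I_{i-1}\|}\bigl[\|X_{i-1}\| + (1+\gamma_1)\|X_i\|\bigr].
\end{align*}

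Finally, to match the form on the right-hand side of \eqref{eq:G_Rk_}, I would bound $(1+\gamma_1)$ by $2$ (which holds under the assumption $(m+2s)\mathbf{u}<1$, hence a fortiori $\gamma_1<1$), so that $\|X_{i-1}\| + (1+\gamma_1)\|X_i\| < 2(\|X_{i-1}\| + \|X_i\|)$, and absorb the factor of $2$ into the numerator. Summing over $i=1,\dots,k$ produces the three advertised sums of \eqref{eq:G_Rk_}. I do not anticipate a genuine obstacle; the only delicate point is keeping track of which $\gamma$-inequalities require which roundoff hypothesis, in particular ensuring the denominators $1 - \gamma_s\|\widehat{Q}_{i-1}\| - \|\Delta I_{i-1}\|$ stay positive for every $i<k$, which is precisely the strengthened assumption added in the theorem statement relative to Lemma A.1.
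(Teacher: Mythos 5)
Your proposal is correct and follows essentially the same route as the paper: the same telescoping identity $G_{R_i}=G_{R_{i-1}}-AE_{X_i}+E_{R_i}$ with $G_{R_0}=O$, substitution of the bounds \eqref{eq:E_Xk_} and \eqref{eq:E_Rk_} from \Cref{lm:local_error_}, and the consolidation $(\gamma_s+\gamma_{m+s})(1+\gamma_1)<2(\gamma_s+\gamma_{m+s})<2\gamma_{m+2s}$, which is exactly the inequality the paper invokes at the final step. Your bookkeeping of which hypothesis justifies which $\gamma$-inequality (positivity of the denominators for all $i<k$ and $(m+2s)\mathbf{u}<1$) also matches the paper's.
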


\begin{proof}
	From \Cref{lm:local_error_}, we have that
	\begin{align*}
		\| G_{R_k} \| 
		& \leq \| A \| \sum_{i=1}^k \| E_{X_i} \| + \sum_{i=1}^k \| E_{R_k} \| \\
		& \leq \sum_{i=1}^k \frac{\gamma_s \| A \| \| \widehat{Q}_{i-1} \|}{1 - \gamma_s \| \widehat{Q}_{i-1} \| - \| \Delta I_{i-1} \|} [ \| X_{i-1} \| + (1+\gamma_1) \| X_i \| ] + \gamma_1 \| A \| \sum_{i=1}^k \| X_i \| \\
		& \qquad + \gamma_1 \sum_{i=1}^k \| R_i \| + \sum_{i=1}^k \frac{\gamma_{m+s} \| A \| \| \widehat{Q}_{i-1} \|}{1 - \gamma_s \| \widehat{Q}_{i-1} \| - \| \Delta I_{i-1} \|} [ \| X_{i-1} \| + (1+\gamma_1) \| X_i \| ] \\
		& < \sum_{i=1}^k \frac{2\gamma_{m+2s} \| A \| \| \widehat{Q}_{i-1} \|}{1 - \gamma_s \| \widehat{Q}_{i-1} \| - \| \Delta I_{i-1} \|} \left( \| X_{i-1} \| + \| X_i \| \right) + \gamma_1 \| A \| \sum_{i=1}^k \| X_i \| + \gamma_1 \sum_{i=1}^k \| R_i \|,
	\end{align*}
where $(\gamma_s + \gamma_{m+s})(1+\gamma_1) < 2(\gamma_s + \gamma_{m+s}) < 2\gamma_{m+2s}$ is used.
\end{proof}

It is an important observation in \eqref{eq:G_Rk_} that the distance $\| \Delta I_{i-1} \|$ is in the denominator, and thus the upper bound of the residual gap can be large if $\| \Delta I_{i-1} \|$ becomes large. 
This also implies that, when using the recursion formulas \eqref{Rec_X_R_P} without the orthonormalization (as an extreme case), the residual gap may increase even if smooth convergence behaviors of the residual and approximation norms are obtained; such a phenomenon is often observed in numerical experiments (e.g., see \Cref{Ex1_3}).


\begin{thebibliography}{99}

\bibitem{AiharaImakuraMorikuni2022SIMAX}
{\sc K.~Aihara, A.~Imakura, and K.~Morikuni}, {\em Cross-interactive residual
  smoothing for global and block {L}anczos-type solvers for linear systems with
  multiple right-hand sides}, SIAM J. Matrix Anal. Appl., 43 (2022),
  pp.~1308--1330, \url{https://doi.org/10.1137/21M1436774}.

\bibitem{AiharaKomeyamaIshiwata2019BIT}
{\sc K.~Aihara, R.~Komeyama, and E.~Ishiwata}, {\em Variants of residual
  smoothing with a small residual gap}, BIT, 59 (2019), pp.~565--584,
  \url{https://doi.org/10.1007/s10543-019-00751-w}.

\bibitem{ChanNg1999SISC}
{\sc T.~F. Chan and M.~K. Ng}, {\em Galerkin projection methods for solving
  multiple linear systems}, SIAM J. Sci. Comput., 21 (1999), pp.~836--850,
  \url{https://doi.org/10.1137/s1064827598310227}.

\bibitem{ChanWan1997SISC}
{\sc T.~F. Chan and W.~L. Wan}, {\em Analysis of projection methods for solving
  linear systems with multiple right-hand sides}, SIAM J. Sci. Comput., 18
  (1997), pp.~1698--1721, \url{https://doi.org/10.1137/s1064827594273067}.

\bibitem{ChatfieldReevesTruhlarDuneczkySchwenke1992}
{\sc D.~C. Chatfield, M.~S. Reeves, D.~G. Truhlar, C.~Duneczky, and D.~W.
  Schwenke}, {\em Complex generalized minimal residual algorithm for iterative
  solution of quantum-mechanical reactive scattering equations}, J. Chem.
  Phys., 97 (1992), pp.~8322--8333, \url{https://doi.org/10.1063/1.463402}.

\bibitem{Cullum1978BIT}
{\sc J.~Cullum}, {\em The simultaneous computation of a few of the
  algebraically largest and smallest eigenvalues of a large, sparse, symmetric
  matrix}, BIT, 18 (1978), pp.~265--275,
  \url{https://doi.org/10.1007/bf01930896}.

\bibitem{DavisHu2011ACM}
{\sc T.~A. Davis and Y.~Hu}, {\em The {U}niversity of {F}lorida sparse matrix
  collection}, ACM Trans. Math. Softw., 38 (2011), pp.~1--25,
  \url{https://doi.org/10.1145/2049662.2049663}.

\bibitem{DuSogabeYuYamamotoZhang2011JCAM}
{\sc L.~Du, T.~Sogabe, B.~Yu, Y.~Yamamoto, and S.-L. Zhang}, {\em A block
  {IDR}($s$) method for nonsymmetric linear systems with multiple right-hand
  sides}, Journal of Computational and Applied Mathematics, 235 (2011),
  pp.~4095--4106, \url{https://doi.org/10.1016/j.cam.2011.02.035}.

\bibitem{Dubrulle2001ETNA}
{\sc A.~A. Dubrulle}, {\em Retooling the method of block conjugate gradioents},
  Electron. Trans. Numer. Anal., 12 (2001), pp.~216--233.

\bibitem{GuennouniJbilouSadok2003ETNA}
{\sc A.~El~Guennouni, K.~Jbilou, and H.~Sadok}, {\em A block version of
  {BiCGSTAB} for linear systems with multiple right-hand sides}, Electron.
  Trans. Numer. Anal., 16 (2003), pp.~129--142.

\bibitem{GuennouniJbilouSadok2004APNUM}
{\sc A.~El~Guennouni, K.~Jbilou, and H.~Sadok}, {\em The block {L}anczos method
  for linear systems with multiple right-hand sides}, Appl. Numer. Math., 51
  (2004), pp.~243--256, \url{https://doi.org/10.1016/j.apnum.2004.04.001}.

\bibitem{FreundMalhotra1997LAA}
{\sc R.~W. Freund and M.~Malhotra}, {\em A block {QMR} algorithm for
  non-{H}ermitian linear systems with multiple right-hand sides}, Linear
  Algebra Appl., 254 (1997), pp.~119--157,
  \url{https://doi.org/10.1016/s0024-3795(96)00529-0}.

\bibitem{GolubRuizTouhami2007SIMAX}
{\sc G.~H. Golub, D.~Ruiz, and A.~Touhami}, {\em A hybrid approach combining
  {C}hebyshev filter and conjugate gradient for solving linear systems with
  multiple right-hand sides}, SIAM J. Matrix Anal. Appl., 29 (2007),
  pp.~774--795, \url{https://doi.org/10.1137/060649458}.

\bibitem{GolubUnderwood1977}
{\sc G.~H. Golub and R.~R. Underwood}, {\em The block {L}anczos method for
  computing eigenvalues}, in Mathematical Software, Elsevier, 1977,
  pp.~361--377, \url{https://doi.org/10.1016/b978-0-12-587260-7.50018-2}.

\bibitem{Greenbaum1997SIMAX}
{\sc A.~Greenbaum}, {\em Estimating the attainable accuracy of recursively
  computed residual methods}, SIAM J. Matrix Anal. Appl., 18 (1997),
  pp.~535--551, \url{https://doi.org/10.1137/S0895479895284944}.

\bibitem{GutknechtRozloznik2001BIT}
{\sc M.~H. Gutknecht and M.~Rozlo\v{z}n\'{i}k}, {\em Residual smoothing
  techniques: Do they improve thelimiting accuracy of iterative solvers?}, BIT,
  41 (2001), pp.~86--114, \url{https://doi.org/10.1023/a:1021917801600}.

\bibitem{Higham2002}
{\sc N.~J. Higham}, {\em {Accuracy and Stability of Numerical Algorithms}},
  SIAM, Philadelphia, PA, 2nd~ed., 2002.

\bibitem{IkegamiSakurai2010}
{\sc T.~Ikegami and T.~Sakurai}, {\em Contour integral eigensolver for
  non-{H}ermitian systems: {A} {R}ayleigh--{R}itz-type approach}, Taiwanese J.
  Math., 14 (2010), pp.~825--837,
  \url{https://doi.org/10.11650/twjm/1500405869}.

\bibitem{IkegamiSakuraiNagashima2010JCAM}
{\sc T.~Ikegami, T.~Sakurai, and U.~Nagashima}, {\em A filter diagonalization
  for generalized eigenvalue problems based on the {S}akurai--{S}ugiura
  projection method}, J. Comput. Appl. Math., 233 (2010), pp.~1927--1936,
  \url{https://doi.org/10.1016/j.cam.2009.09.029}.

\bibitem{Jbilou1999JCAM}
{\sc K.~Jbilou}, {\em Smoothing iterative block methods for linear systems with
  multiple right-hand sides}, J. Comput. Appl. Math., 107 (1999), pp.~97--109,
  \url{https://doi.org/10.1016/s0377-0427(99)00083-7}.

\bibitem{JbilouMessaoudiSadok1999APNUM}
{\sc K.~Jbilou, A.~Messaoudi, and H.~Sadok}, {\em Global {FOM} and {GMRES}
  algorithms for matrix equations}, Appl. Numer. Math., 31 (1999), pp.~49--63,
  \url{https://doi.org/10.1016/S0168-9274(98)00094-4}.

\bibitem{JbilouSadokTinzefte2005ETNA}
{\sc K.~Jbilou, H.~Sadok, and A.~Tinzefte}, {\em Oblique projection methods for
  linear systems with multiple right-hand sides}, Electron. Trans. Numer.
  Anal., 20 (2005), pp.~119--138.

\bibitem{KomeyamaAiharaIshiwata2018TJSIAM}
{\sc R.~Komeyama, K.~Aihara, and E.~Ishiwata}, {\em Reconsideration of residual
  smoothing technique for improving the accuracy of approximate solutions of
  {CGS}-type iterative methods}, Trans. Japan Soc. Ind. Appl. Math., 28 (2018),
  pp.~18--38, \url{https://doi.org/10.11540/jsiamt.28.1_18}.

\bibitem{KuramotoTadano2020TJSIAM}
{\sc R.~Kuramoto and H.~Tadano}, {\em Accuracy improvement of the block
  product-type iterative methods for linear systems with multiple right-hand
  sides}, Trans. Japan Soc. Ind. Appl. Math., 30 (2020), pp.~290--319,
  \url{https://doi.org/10.11540/jsiamt.30.4_290}.

\bibitem{MalhotraFreundPinsky1997}
{\sc M.~Malhotra, R.~W. Freund, and P.~M. Pinsky}, {\em Iterative solution of
  multiple radiation and scattering problems in structural acoustics using a
  block quasi-minimal residual algorithm}, Comput. Methods Appl. Mech. Engrg.,
  146 (1997), pp.~173--196,
  \url{https://doi.org/10.1016/s0045-7825(96)01227-3}.

\bibitem{Miller1991}
{\sc E.~K. Miller}, {\em Assessing the impact of large-scale computing on the
  size and complexity of first-principles electromagnetic models}, in
  Directions in Electromagnetic Wave Modeling, Springer {US}, 1991,
  pp.~185--196, \url{https://doi.org/10.1007/978-1-4899-3677-6_19}.

\bibitem{NaitoTadanoSakurai2012JSIAMLett}
{\sc M.~Naito, H.~Tadano, and T.~Sakurai}, {\em A modified block {IDR}($s$)
  method for computing high accuracy solutions}, JSIAM Lett., 4 (2012),
  pp.~25--28, \url{https://doi.org/10.14495/jsiaml.4.25}.

\bibitem{NakamuraIshikawaKuramashiSakuraiTadano2012}
{\sc Y.~Nakamura, K.-I. Ishikawa, Y.~Kuramashi, T.~Sakurai, and H.~Tadano},
  {\em Modified block {BiCGSTAB} for lattice {QCD}}, Comput. Phys. Commun., 183
  (2012), pp.~34--37, \url{https://doi.org/10.1016/j.cpc.2011.08.010}.

\bibitem{NikishinYeremin1995SIMAX}
{\sc A.~A. Nikishin and A.~Y. Yeremin}, {\em Variable block {CG} algorithms for
  solving large sparse symmetric positive definite linear systems on parallel
  computers, {I}: {G}eneral iterative scheme}, SIAM J. Matrix Anal. Appl., 16
  (1995), pp.~1135--1153, \url{https://doi.org/10.1137/s0895479893247679}.

\bibitem{OLeary1980LAA}
{\sc D.~P. O'Leary}, {\em The block conjugate gradient algorithm and related
  methods}, Linear Algebra Appl., 29 (1980), pp.~293--322,
  \url{https://doi.org/10.1016/0024-3795(80)90247-5}.

\bibitem{PapadrakakisSmerou1990}
{\sc M.~Papadrakakis and S.~Smerou}, {\em A new implementation of the {L}anczos
  method in linear problems}, Internat. J. Numer. Methods Engrg., 29 (1990),
  pp.~141--159, \url{https://doi.org/10.1002/nme.1620290110}.

\bibitem{Polizzi2009}
{\sc E.~Polizzi}, {\em Density-matrix-based algorithm for solving eigenvalue
  problems}, Phys. Rev. B, 79 (2009), 115112,
  \url{https://doi.org/10.1103/physrevb.79.115112}.

\bibitem{RashediEbadiBirkFrommer2016JCAM}
{\sc S.~Rashedi, G.~Ebadi, S.~Birk, and A.~Frommer}, {\em On short recurrence
  {K}rylov type methods for linear systems with many right-hand sides}, J.
  Comput. Appl. Math., 300 (2016), pp.~18--29,
  \url{https://doi.org/10.1016/j.cam.2015.11.040}.

\bibitem{Saad1987MathComp}
{\sc Y.~Saad}, {\em On the {L}anczos method for solving symmetric linear
  systems with several right-hand sides}, Math. Comp., 48 (1987), pp.~651--651,
  \url{https://doi.org/10.1090/s0025-5718-1987-0878697-3}.

\bibitem{SaitoTadanoImakura2014JSIAML}
{\sc S.~Saito, H.~Tadano, and A.~Imakura}, {\em Development of the block
  {BiCGSTAB}($\ell$) method for solving linear systems with multiple right hand
  sides}, {JSIAM} Lett., 6 (2014), pp.~65--68,
  \url{https://doi.org/10.14495/jsiaml.6.65}.

\bibitem{SakuraiTadanoKuramashi2010}
{\sc T.~Sakurai, H.~Tadano, and Y.~Kuramashi}, {\em Application of block
  {K}rylov subspace algorithms to the {W}ilson--{D}irac equation with multiple
  right-hand sides in lattice {QCD}}, Comput. Phys. Commun., 181 (2010),
  pp.~113--117, \url{https://doi.org/10.1016/j.cpc.2009.09.006}.

\bibitem{Schonauer1987}
{\sc W.~Sch\"{o}nauer}, {\em Scientific Computing on Vector Computers},
  Elsevier, Amsterdam, 1987.

\bibitem{Simoncini1997SIMAX}
{\sc V.~Simoncini}, {\em A stabilized {QMR} version of block {BiCG}}, SIAM J.
  Matrix Anal. Appl., 18 (1997), pp.~419--434,
  \url{https://doi.org/10.1137/s0895479894264673}.

\bibitem{SleijpenBootenFokkemaVanderVorst1996BIT}
{\sc G.~L.~G. Sleijpen, A.~G.~L. Booten, D.~R. Fokkema, and H.~A. van~der
  Vorst}, {\em Jacobi--{D}avidson type methods for generalized eigenproblems
  and polynomial eigenproblems}, BIT, 36 (1996), pp.~595--633,
  \url{https://doi.org/10.1007/bf01731936}.

\bibitem{Smith1987th}
{\sc C.~F. Smith}, {\em The performance of preconditioned iterative methods in
  computational electromagnetics}, PhD thesis, Dept. of Electrical Engineering,
  Univ. of Illinois at Urbana-Champaign, 1987.

\bibitem{SmithPetersonMittra1989IEEE}
{\sc C.~F. Smith, A.~F. Peterson, and R.~Mittra}, {\em A conjugate gradient
  algorithm for the treatment of multiple incident electromagnetic fields},
  IEEE Trans. Antennas and Propagation, 37 (1989), pp.~1490--1493,
  \url{https://doi.org/10.1109/8.43571}.

\bibitem{SmithPetersonMittra1990IEEE}
{\sc C.~F. Smith, A.~F. Peterson, and R.~Mittra}, {\em The biconjugate gradient
  method for electromagnetic scattering}, IEEE Trans. Antennas and Propagation,
  38 (1990), pp.~938--940, \url{https://doi.org/10.1109/8.55595}.

\bibitem{Tadano2019JJIAM}
{\sc H.~Tadano}, {\em Development of the block {BiCGGR}2 method for linear
  systems with multiple right-hand sides}, Jpn. J. Ind. Appl. Math., 36 (2019),
  pp.~563--577, \url{https://doi.org/10.1007/s13160-019-00354-6}.

\bibitem{TadanoKuramashiSakurai2010}
{\sc H.~Tadano, Y.~Kuramashi, and T.~Sakurai}, {\em Application of
  preconditioned block {BiCGGR} to the {W}ilson--{D}irac equation with multiple
  right-hand sides in lattice {QCD}}, Comput. Phys. Commun., 181 (2010),
  pp.~883--886, \url{https://doi.org/10.1016/j.cpc.2009.12.025}.

\bibitem{TadanoSakuraiKuramashi2009}
{\sc H.~Tadano, T.~Sakurai, and Y.~Kuramashi}, {\em Block {BiCGGR}: a new block
  {K}rylov subspace method for computing high accuracy solutions}, {JSIAM}
  Lett., 1 (2009), pp.~44--47, \url{https://doi.org/10.14495/jsiaml.1.44}.

\bibitem{TaherianToutounian2021NUMALGO}
{\sc A.~Taherian and F.~Toutounian}, {\em Block {GPBi-CG} method for solving
  nonsymmetric linear systems with multiple right-hand sides and its
  convergence analysis}, Numer. Algorithms, 88 (2021), pp.~1831--1850,
  \url{https://doi.org/10.1007/s11075-021-01097-7}.

\bibitem{Walker1995APNUM}
{\sc H.~F. Walker}, {\em Residual smoothing and peak/plateau behavior in
  {K}rylov subspace methods}, Appl. Numer. Math., 19 (1995), pp.~279--286,
  \url{https://doi.org/10.1016/0168-9274(95)00087-9}.

\bibitem{Weiss1994NLAA}
{\sc R.~Weiss}, {\em Properties of generalized conjugate gradient methods},
  Numer. Linear Algebra Appl., 1 (1994), pp.~45--63,
  \url{https://doi.org/10.1002/nla.1680010106}.

\bibitem{Weiss1996}
{\sc R.~Weiss}, {\em Parameter-free iterative linear solvers}, Akademie Verlag,
  Berlin, 1996.

\bibitem{YangMiller1989}
{\sc W.~Yang and W.~H. Miller}, {\em Block {L}anczos approach combined with
  matrix continued fraction for the ${S}$-matrix {K}ohn variational principle
  in quantum scattering}, J. Chem. Phys., 91 (1989), pp.~3504--3508,
  \url{https://doi.org/10.1063/1.456880}.

\bibitem{ZhangDai2008NMJCU}
{\sc J.~Zhang and F.~Dai}, {\em Global {CGS} algorithm for linear systems with
  multiple right-hand sides}, Numer. Math. J. Chinese Univ., 30 (2008),
  pp.~390--399.

\bibitem{ZhangDaiZhao2010AMC}
{\sc J.~Zhang, H.~Dai, and J.~Zhao}, {\em Generalized global conjugate gradient
  squared algorithm}, Appl. Math. Comput., 216 (2010), pp.~3694--3706,
  \url{https://doi.org/10.1016/j.amc.2010.05.026}.

\bibitem{ZhouWalker1994SISC}
{\sc L.~Zhou and H.~F. Walker}, {\em Residual smoothing techniques for
  iterative methods}, SIAM J. Sci. Comput., 15 (1994), pp.~297--312,
  \url{https://doi.org/10.1137/0915021}.

\end{thebibliography}
\end{document}